\definecolor{darkgreen}{rgb}{0,0.45,0}
\definecolor{darkred}{rgb}{0.65,0,0}
\definecolor{darkblue}{rgb}{0,0,0.6}
\NewDocumentCommand{\newzctheorem}{mmO{#1}}{
  \newtheorem{#1}[sharedtheoremcounter]{#2}
    \AddToHook{env/#1/begin}{%
      \zcsetup{countertype={sharedtheoremcounter=#3}}}
}
\theoremstyle{plain}
\theoremstyle{definition}
\begin{document}

\opt{jsl}{
  \title[Constructive reflectivity principles for regular theories]%
  {Constructive reflectivity principles \\ for regular theories}
}
\opt{arxiv}{
  \title{Constructive reflectivity principles \\ for regular theories%
  \footnote{An earlier version was entitled ``\ldots reflection principles\ldots'';
  we have amended the title, at Colin McLarty's suggestion, to avoid clashing with the classical sense of that phrase.}}
}

\newcommand{\ourthanks}{This research was funded in part by the Swedish Research Council grant no.\ 2008-05076, and the Research Council of Norway grant no.\ 230525.}

\opt{arxiv}{
\author{Henrik Forssell \and Peter LeFanu Lumsdaine}
}
\opt{jsl}{
\author{Henrik Forssell}
\revauthor{Forssell, Henrik}
\twoaddress%
{Department of Informatics \\ University of Oslo \\ Postboks 1080 Blindern \\ 0316 Oslo \\ Norway}
{Department of Mathematics and Science education \\ University of South-Eastern Norway \\ Papirbredden 1 \\ 3045 Drammen \\ Norway}
{5}
\email{jonf@ifi.uio.no}
\thanks{\ourthanks}

\author{Peter LeFanu Lumsdaine}
\revauthor{Lumsdaine, Peter LeFanu}
\address{Department of Mathematics \\ Stockholm University \\ SE-106 91 Stockholm \\ Sweden }
\email{p.l.lumsdaine@math.su.se}
}

\opt{arxiv}{
  \date{April 13, 2016
    \\ \footnotesize (this revision April 13, 2026)}
}

\opt{jsl}{
  \date{\todo{look up original!}}
}

\opt{jsl}{
\subjclass{03F65, 03G30, 18C10} 
\keywords{Regular logic, constructive semantics, choice principles}
}

\newcommand{\doinote}{\doi{10.1017/jsl.2019.70}}
\newcommand{\arxivnote}{\arxiv{1604.03851}}

\opt{arxiv}{
  \titlenote{\doinote \hspace{1.5em} \arxivnote}
}

\opt{arxiv}{
  \maketitle
  \thispagestyle{firstpage}
}

\begin{abstract}
  Classically, any structure for a signature $\Sigma$ may be completed to a model of a desired regular theory $\theory$ by means of the \emph{chase construction} or \emph{small object argument}.
  Moreover, this exhibits $\Mod{\theory}$ as weakly reflective in $\Str{\Sigma}$.

  We investigate this in the constructive setting.
  The basic construction is unproblematic; however, it is no longer a weak reflection.
  Indeed, we show that various reflectivity principles for models of regular theories are equivalent to choice principles in the ambient set theory.
  However, the embedding of a structure into its chase-completion still satisfies a \emph{conservativity} property, which suffices for applications such as the completeness of regular logic with respect to Tarski (i.e.~set) models.

  Unlike most constructive developments of predicate logic, we do not assume that equality between symbols in the signature is decidable. 
  While in this setting, we also give a version of one classical lemma which is trivial over discrete signatures but more interesting here: the abstraction of constants in a proof to variables.
\end{abstract}

\opt{arxiv}{
}

\opt{jsl}{\maketitle}

\section{Introduction}\label{section:intro}

Most developments of first-order logic are given in a classical meta-theory.
Even presentations in constructive settings usually assume that the signature is \emph{discrete}: that is, that decidability (excluded middle) holds for equality of the basic function and predicate symbols.
This precludes various classical constructions, such as the diagram of a structure, or the internal language of a category.

In many cases, the restriction to discrete signatures can be dropped with little consequence. 
Many standard results and constructions, especially as developed in the categorical tradition, go through constructively for arbitrary signatures with no significant modification.
Some, however, do not adapt so straightforwardly.

Here we set down constructive versions of two such results, for arbitrary signatures: the ``chase'' construction for producing models of regular (or ``positive-primitive'') theories; and  the abstraction of constants in a proof to variables. 

\subtleparagraph Most substantially, we investigate the \emph{chase construction} for regular theories. 

This is one of a family of similar constructions which have been invented independently in several fields.
We draw the name \emph{chase} from database theory \cite[\textsection 8.4]{abiteboulhullvianu:95}; categorically, it is a form of the \emph{small object argument} \cite{adamek-herrlich-rosicky-tholen}.

In each case, the idea is to construct a model of a regular theory\footnote{Or, in categorical terms, to obtain an injectivity condition.}, starting from a given structure, by iteratively adjoining elements to witness all existential axioms, and updating the basic predicates as necessary.
Classically, this provides a weak reflection from arbitrary structures into models of the theory.
That is, it provides for each structure $A$ a model $\chase{A}$ and homomorphism $\eta : A \to<125> \chase{A}$, such that any homomorphism  $f : A \to<125> M$  from $A$ to a model extends along $\eta$ to a homomorphism $\bar{f} : \chase{A} \to<125> M$.

In \zcref{sec:conservativity} we give a functorial chase construction for regular theories, and show that this provides a \emph{\theory-conservative} map from any structure into a model of the theory.
This suffices for applications including the regular and geometric completeness of regular logic (with respect to Tarski models).

However, unless choice holds, this cannot provide a weak reflection as it does classically.
Indeed, in \zcref{section: reflectivity and choice principles}, we show (using variations of the chase) that this and related reflectivity principles for regular theories are equivalent to choice principles in the ambient set theory. 

\subtleparagraph Chase constructions can be regarded and used syntactically as a kind of proof calculus \cite{costelombardiroy:01}: presentation formulas take the place of structures, and results of the process correspond to provable consequences.
This exploits the interchangeability of variables and constants/elements, something classically too simple to usually warrant more than a throwaway remark:
any proof mentions only finitely many constants of the language, so they (or any subset) may be replaced by free variables (“abstracted away”) throughout.

Over non-discrete signatures, this is no longer trivial, since the set of constants occurring in the proof may not be discrete, nor hence constructively finite.
With a little more effort, however, a version of the result still holds, by looking at which \emph{occurrences} of the constants are required to be equal by the proof, and using this to perform the abstraction.
This is the main content of \zcref{sec:consts-by-vbles}, accompanied by a sample application over diagrams of models. 

\subtleparagraph Having recalled some background in \zcref{section:prelims}, we treat the abstraction of constants first in \zcref{sec:consts-by-vbles}, as a warm-up to the main results on the chase construction and reflectivity principles in \zcref{sec:conservativity,section: reflectivity and choice principles}.
The two topics are each self-contained, however, so the reader who wishes to skip \zcref{sec:consts-by-vbles} and cut to the chase may safely do so.

\subsection*{Acknowledgements}

For helpful discussions and correspondence on the material of this paper, we would like to thank (in approximately chronological order) Christian Espíndola, Erik Palmgren, Håkon Robbestad Gylterud, Ji\v{r}\'{i} Rosick\'{y}, Michael Rathjen, and Marek Zawadowski.

\opt{arxiv}{\ourthanks}

\section{Preliminaries}\label{section:prelims}

As mentioned in the introduction, constructive treatments of syntax (e.g.\ Troelstra, Van Dalen \cite[\textsection 1.1]{troelstravandalen:88}, Fitting \cite[\textsection 4.1]{fitting:69}, and Veldman \cite{veldman:76}) typically assume that the sets of basic function and relation symbols are discrete, and often moreover enumerable, though these assumptions are rarely used.
This is at odds with classical and categorical model theory, where it is standard to consider signatures consisting of arbitrary sets; and this extra generality is needed for instance in taking the diagram of a structure, where one adds the elements of the structure to the language as constants.

In fact, especially in categorical treatments (e.g.\ Johnstone \cite[D1.1]{elephant1}, Diaconescu \cite[\textsection 3.1]{diaconescu08}), many proofs and constructions over arbitrary signatures are already fully constructive, and this is generally well-known in the field.
However, we are unaware of any source that lays out the fundamentals of first-order model theory over arbitrary signatures in an explicitly constructive setting.
We therefore recall basic background results in slightly more detail than usual, to put on record the fact that they and their standard proofs are constructively unproblematic.

Precisely, we work throughout in the constructive set theory IZF, as given in Troelstra, Van Dalen \cite[\textsection 11.8]{troelstravandalen:88}.
\zcref[S]{sec:conservativity,sec:consts-by-vbles} may comfortably also be read as working over IHOL$\infty$ (the logic of an elementary topos with a natural numbers object), set out in Lambek, Scott \cite{lambek-scott:1986}, or over extensional type theory with an impredicative universe of propositions, in Hofmann \cite{hofmann:1995}.
\zcref[S]{section: reflectivity and choice principles}, however, deals essentially with issues of unbounded quantification, which cannot be so straightforwardly translated to those settings.

\subsection*{Syntax}

Throughout, we say a set $X$ is \emph{discrete} if it has decidable equality: for all $x,y \in X$, either $x = y$ or $x \neq y$.%
\footnote{This is distinct from the homotopical sense of “discrete” in Homotopy Type Theory.}
By \emph{finite}, we mean \emph{cardinal-finite}, i.e.\ bijectable with some set of the form $\{1,\ldots,n\}$.
A set is \emph{Kuratowski-finite} or \emph{K-finite} if it admits a surjection from some cardinal-finite set.
A set is finite just if it is K-finite and discrete.

A \emph{family} means by default a set-indexed family.

A (finitary) signature $\Sigma$ consists of sets $\Sigma_{\text{Fun}},\Sigma_{\text{Rel}}$ of function and relation symbols, with a natural-number arities function $\arity{(-)} : \Sigma_{\text{Fun}} + \Sigma_{\text{Rel}} \to \nat$.
(We consider only single-sorted signatures.)

Syntax over such a signature is now constructed as usual (specifically, we follow Johnstone \cite[D1.1.3]{elephant1}), using a fixed set $V$ of variables, assumed discrete and constructively infinite\footnote{I.e.\ with an operation giving, for any finite list from $V$, some element not in the list.}.
We view the syntax as stratified into sets $\Tm[\Sigma]{I}$, $\Form[\Sigma]{I}$, for $I \subseteq_{\mathrm{fin}} V$, of “terms (resp.\ formulas) in context $I$”, i.e.\ with $\FV(t), \FV(\varphi) \subseteq I$.

As in Johnstone \cite[D1.1.3]{elephant1}, we consider fragments of logic including all atomic formulas but varying selections of connectives.
\emph{Horn} logic has just $\land$ and $\ltrue$\footnote{Classical Horn clauses correspond to Horn \emph{sequents} in this sense.};
\emph{regular} logic has these, plus $\exists$;
\emph{geometric} logic additionally has arbitrarily-indexed disjunctions, within finite contexts;
and \emph{first-order} logic has all usual (finitary) connectives and quantifiers.

While our contexts are formally finite subsets of $V$, we will often write them as lists $\x = x_1,\ldots,x_n$ of distinct variables, representing the set $\{x_1,\ldots,x_n\}$.
Similarly, our fundamental form of (simultaneous, capture-free) substitution is formally $\varphi[f]$, where $\varphi \in \Form{I}$ and $f : I \to \Tm{J}$; but for a finite context $\x$, we will often write this as $\varphi[\s/\x]$, where $s_i = f(x_i)$.
In the special case of a subset inclusion function $f : I' \subseteq I$, for $\varphi \in \Form{I'}$, we will write the weakening $\varphi[f] \in \Form{I}$ just as $\varphi$.

Finally, we will often display variables explicitly, introducing a formula as e.g.\ $\varphi(\x)$ to indicate $\varphi \in \Form{\x}$, and having done so, writing $\varphi(\s)$ for the substitution $\varphi[\s/\x]$.

A \emph{sequent} consists of a context and a pair of formulas in that context, written as the formal expression $\varphi \proves_I \psi$.
A sequent is \emph{regular}, \emph{geometric}, etc.\ if its formulas lie in that fragment.

A \emph{(regular, geometric, etc.)\ theory} $\theory$ is a set of sequents of the specified fragment, the \emph{axioms} of $\theory$.
We write $\varphi \proves^{\theory}_I \psi$ to indicate that the sequent $\varphi \proves_I \psi$ is derivable from axioms of $\theory$, in the fragment of logic under consideration.

A \emph{normal regular sequent} is one of the form $\varphi(\x) \proves_\x \fins{\y} \psi(\x,\y)$, with $\varphi$, $\psi$ Horn formulas, and such that $\psi(\x,\y) \proves^\emptyset_{\x,\y} \varphi(\x)$ is derivable.
A \emph{normal regular theory} is a family of normal regular sequents.
By Johnstone \cite[D1.3.10]{elephant1}, every regular sequent or theory can be canonically transformed to an equivalent normal one, which we call its \emph{normalisation}.

\subsection*{Semantics}

Structures for a signature, and the subsequent interpretation of first-order logic therein, are defined as usual.
We write $\Str{\Sigma}$ for the (complete and co-complete) category of $\Sigma$-structures, and $\Mod{\theory}$ for the full subcategory of $\theory$-models.

A map of signatures $f : \Sigma \to \Sigma'$ (i.e.\ functions $f_{\text{Fun}} : \Sigma_{\text{Fun}} \to \Sigma'_{\text{Fun}}$, $f_{\text{Rel}} : \Sigma_{\text{Rel}} \to \Sigma'_{\text{Rel}}$, preserving arities) induces on the one hand a translation of syntax $(-)^f : \Form[\Sigma]{} \to \Form[\Sigma']{}$, and on the other hand a reduct functor $f^* : \Str{\Sigma'} \to \Str{\Sigma}$.
These satisfy $\interp[f^*A]{ \x\ |\ \varphi } = \interp[A]{ \x\ |\ \varphi^f}$; so for any theory $\theory$, $f^*$ restricts to a functor $f^* : \Mod{\theory^f} \to \Mod{\theory}$.

For any Horn formula $\varphi(\x)$ (i.e.\ conjunction of atomic formulas), there is a structure $\syntstr{\x}{\varphi}$ which \emph{represents} the interpretation of $\varphi(\x)$: there is a natural isomorphism $\interp[A]{ \x\ |\ \varphi} \iso \operatorname{Hom}_{\Str{\Sigma}}(\syntstr{\x}{\varphi},A)$.
In case the signature is \emph{relational} (i.e.\ $\Sigma_{\text{Fun}} = \emptyset$), the domain $|\syntstr{\x}{\varphi}|$ is finite.
Explicitly, the domain of $\syntstr{\x}{\varphi}$ is the quotient of $\x$ by the (decidable) equivalence relation generated by the equality statements of $\varphi$; that is, such that $x_i\sim x_j$ iff $\varphi \proves^{\emptyset}_{\x} x_i = x_j$. 
  For $R \in \Sigma$,
\[\sem{R} :=
   \cterm%
     {\pair{ [x_{i_{0}}], \ldots, [x_{i_{\arity{R}-1}}] } \in (\x/{\sim})^{\arity{R}}}%
     {\varphi \proves^{\emptyset}_{\x}R(x_{i_0},\ldots, x_{i_{\arity{R}-1}})}.\]
For any regular formula $\theta(\x)$, $\syntstr{\x}{\varphi} \believes \theta([x_0],\ldots,[x_{n-1}])$ just if $\varphi(\x) \proves^{\emptyset}_\x \theta(\x)$.

A structure validates a regular normal sequent $\varphi(\x) \proves_\x \fins{\y} \psi(\x,\y)$ just if it is \emph{injective} with respect to the canonical map $\syntstr{\x}{\varphi} \to \syntstr{\x,\y}{\psi}$ (cf.\ \cite[Ex.~5.e]{adamekandrosicky:94}).

\section{Replacing constants by variables} \label{sec:consts-by-vbles}

As a first warm-up with non-discrete signatures, we give a version of the classical technique of abstracting away constants in a derivation to variables,
together with a sample application.

Before giving the general construction, we take an example, both to convince the reader that the statement is not quite trivial, and to illustrate the procedure used in the proof.

Suppose we have a derivation, from some theory $\theory$, of $\top\proves R(c_1,c_2)$.
We want to abstract this to a derivation not mentioning $c_1,c_2$, but with some free variables $\y$ instead, from which the original derivation can be recovered by substitution.

One's knee-jerk reaction might be to replace $c_1$ by $y_1$ throughout, and $c_2$ by $y_2$.
Of course, this is wrong: it works only if $c_1 \neq c_2$.
Classically, one can salvage this approach by working by cases.
Either $c_1 = c_2$ or $c_1 \neq c_2$, and in each case one gets a bijection from $\{c_1,c_2\}$ to some finite set of variables, which one can abstract along.

Constructively, however, we cannot in general make this case distinction, and hence cannot find such a bijection.
To find an abstraction, we must look not just at the conclusion but at the entire derivation.

For instance, if the derivation obtains $\top\proves R(c_1,c_2)$ from an axiom $\top\proves_{ x_1,x_2} R(x_1,x_2)$, then we abstract it to a derivation of $\top\proves_{ y_1,y_2} R(y_1,y_2)$.
In case $c_1 = c_2$, our abstraction is more general than the original derivation; but in any case it is general enough that substituting $c_i$ for $y_i$ recovers the original.

If instead $\top\proves R(c_1,c_2)$ is obtained from an axiom $\top \proves_{ x_1} R(x_1,x_1)$, then this shows that in fact $c_1$ and $c_2$ must be equal.
So in this case we abstract to a derivation of $\top\proves R(y_1,y_1)$, and know that substituting $c_1$ for $y_1$ gives back the original.

The key point is that while “$c_1 = c_2$?” may not be decidable, the weaker question “does this derivation \emph{require} $c_1 = c_2$?” always is, essentially since our syntax and deduction system is finitary.
So we replace all \emph{occurrences} of constants by variables, distinct as far as possible, and equal just when the derivation requires them to be.

\begin{lemma}\label{lemma: replacing constants with variables}
  Let $\Sigma$ be a signature, $C$ a decidable subset of its constants, and $\theory$ a theory over $\Sigma \setminus C$.
  Suppose $\varphi(\x)$, $\psi(\x)$ are formulas over $\Sigma$ such that $\varphi(\x) \proves^\theory_\x \psi(\x)$.
  Then there exist formulas $\bar{\varphi}(\x,\y)$ and $\bar{\psi}(\x,\y)$ over $\Sigma \setminus C$, and a function $f : \y \to C$, such that $\bar{\varphi}[f]=\varphi$, $\bar{\psi}[f]=\psi$, and $\bar{\varphi}(\x,\y) \proves^\theory_{\x,\y} \bar{\psi}(\x,\y)$.

  Moreover, the derivation of $\bar{\varphi}(\x,\y) \proves^\theory_{\x,\y} \bar{\psi}(\x,\y)$ may be taken to use the same logical rules and axiom schemes as the original derivation of $\varphi(\x) \proves^\theory_{\x} \psi(\x)$; in particular, to lie in the same fragment of logic.
\end{lemma}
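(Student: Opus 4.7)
The plan is to proceed by induction on the derivation tree $\pi$ of $\varphi(\x) \proves^\theory_\x \psi(\x)$. At each sub-derivation with conclusion $\varphi' \proves_{\x'} \psi'$, the inductive hypothesis yields an abstraction $\bar\varphi'(\x',\y') \proves_{\x',\y'} \bar\psi'(\x',\y')$ together with a substitution $f' : \y' \to C$ satisfying $\bar\varphi'[f']=\varphi'$ and $\bar\psi'[f']=\psi'$. Crucially, since $C$ is decidable and each formula is finitary, the set of occurrences of $C$-constants in any given formula is a discrete K-finite (hence finite) set. So at any stage we may form a \emph{most general abstraction} $\varphi^*$ of a formula $\varphi$ by replacing each $C$-occurrence by a distinct fresh variable; every other abstraction of $\varphi$ then arises by pre-composing with a suitable substitution (essentially, by imposing an equivalence relation on these fresh variables).

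For the base cases, axioms of $\theory$ contain no constants from $C$ (as $\theory$ is a theory over $\Sigma \setminus C$), so the identity abstraction suffices. For the reflexivity/identity axiom $\varphi \proves_\x \varphi$, I take the most general abstraction $\varphi^*$ simultaneously on both sides, with the associated $f$. For single-premise logical rules (conjunction, existential, weakening, and so on) the abstraction simply propagates through the rule. The substitution rule, where $\varphi \proves_\x \psi$ is transformed into $\varphi[\s/\x] \proves_J \psi[\s/\x]$, requires additionally abstracting the constants in $\s$: introduce fresh variables $\z$ with $g : \z \to C$ capturing the $C$-occurrences in $\s$, form $\bar\s$ in context $J,\z$, and substitute $\bar\s$ into the abstracted premise, extending $f$ by $g$.

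The main obstacle is the case of a rule with two premises sharing a formula, most notably cut. Here the inductive hypothesis supplies two abstractions $\bar\chi_1(\x,\y_1)$ and $\bar\chi_2(\x,\y_2)$ of the shared $\chi$, each arising as a specialisation of the most general abstraction $\chi^*$ by an equivalence relation $\sim_i$ on its fresh variables. I take the join $\sim$ of $\sim_1$ and $\sim_2$ on the finite discrete set of variables of $\chi^*$, form the common refinement $\bar\chi = \chi^*/\sim$, and pre-substitute this into both sub-derivations (which amounts to identifying variables in $\bar\varphi_1$, $\bar\chi_1$, $\bar\chi_2$, $\bar\psi_2$ accordingly). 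The two derivations then share a common intermediate sequent $\bar\chi$, and cut applies. Analogous joins handle other multi-premise rules.

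Because each inductive step uses the same logical rule as the corresponding step in $\pi$, possibly preceded by admissible variable-renaming substitutions, the resulting derivation lies in the same logical fragment as $\pi$. Constructively, nothing more than decidability of $C$ and finiteness of formulas is required: every equivalence-relation join and every variable-matching is a finite operation on a discrete set, so no choice or excluded middle enters. The subtlety, as highlighted in the motivating example, is that we cannot decide whether two distinct constants in $C$ are truly distinct, but we never need to: we only ever unify variables when the derivation itself forces this, which is a finitary and decidable question.
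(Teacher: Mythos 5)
Your proposal is correct and follows essentially the same route as the paper's proof: induction on the derivation, abstracting each occurrence of a $C$-constant by a fresh variable, and handling cut (and other multi-premise rules) by merging the two abstractions of the shared formula via a finitely generated, hence decidable, equivalence relation on the fresh variables. Your ``most general abstraction'' framing is just a repackaging of the paper's direct construction of that equivalence relation from corresponding occurrences.
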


\begin{proof}
  Recall that we work formally with the rules set out in Johnstone~\cite[D1.3.1]{elephant1}, i.e.~the standard intuitionistic sequent rules, presented in terms of sequents with a single antecedent.

  We work directly by induction over the form of the derivation of $\varphi$.
  
  \case{Axioms of $\theory$.}
  If the derivation consists of just an axiom $\varphi \proves_\x \psi$ of $\theory$, then since $\theory$ does not mention $C$, we are done by taking $\y = \emptyset$, $\bar{\varphi} = \varphi$, $\bar{\psi} = \psi$.

  \case{Structural and logical axioms.}
  If the derivation is an $\land$-elimination axiom
  \begin{mathpar}
    \inferrule{ }{\varphi_0 \land \varphi_1 \proves_\x \varphi_0}
  \end{mathpar}
  we take $\y$ to be fresh variables (i.e.\ distinct from $\x$) corresponding to the (finite) set of occurrences of constants from $C$ in either $\varphi_0$ or $\varphi_1$, and $f : \y \to C$ the function sending each variable to the constant appearing at the corresponding occurrence.
  Then taking $\bar{\varphi}_i$ to be $\varphi_i$ with each such occurrence replaced by the corresponding variable, and $\bar{\varphi} = \bar{\varphi}_0 \land \bar{\varphi}_1$, $\bar{\psi} = \bar{\varphi}_0$, we are done.

  Other axioms are entirely analogous.

  \case{Substitution rule.}
  If the derivation concludes with a substitution
\begin{mathpar}
\inferrule{\varphi' \proves_{\x'} \psi'}{\varphi'[\s/\x'] \proves_{\x} \psi'[\s/\x']}
\end{mathpar}
where $\s : \x' \to \Tm{\x}$, then by induction we have $\y'$, $f' : \y' \to C$, and $\bar{\varphi}' \proves_{\x',\y'} \bar{\psi}'$ as in the original statement.

  Choose fresh variables $\y''$ corresponding to occurrences of constants from $C$ in terms in $\s$, $f'' : \y'' \to C$ sending each variable to the corresponding constant, and $\bar{\s} : \x' \to \Tm{\x,\y''}$ to be $\s$ with occurrences replaced by the corresponding variables.

  Now taking $\y = (\y',\y'')$, $f = f' \cup f''$, $\bar{\varphi} = \bar{\varphi}'[\bar{\s}/\x']$, $\bar{\psi} = \bar{\psi}'[\bar{\s}/\x']$, we are done, concluding by the substitution
  \begin{mathpar}
    \inferrule{\bar{\varphi}' \proves_{\x',\y'} \bar{\psi}'}{\bar{\varphi}'[\bar{\s}/\x'] \proves_{\x,\y',\y''} \bar{\psi}'[\bar{\s}/\x']}.
  \end{mathpar}

  \case{Double rules.}
  The double rules for $\exists$, $\forall$, and $\Imp$ are straightforward, needing no modification of the $\y$, $f$ provided by the inductive hypothesis.

  \case{Multi-premise rules.}
  Suppose the derivation concludes with a cut
  \begin{mathpar}
    \inferrule{\varphi \proves_\x \chi \\ \chi \proves_\x \psi}{\varphi \vdash_{\x} \psi}.
  \end{mathpar}
  By induction we have abstractions $\y'$, $f'$, and $\bar{\varphi}' \proves^{\theory}_{\x,\y'} \bar{\chi}'$ of the first subderivation, and $\y''$, $f''$, and $\bar{\chi}'' \proves^{\theory}_{\x,\y''} \bar{\psi}''$ of the second.
  Without loss of generality, we may assume $\y'$, $\y''$ disjoint.

  Since $\bar{\chi}'[f'] = \bar{\chi}''[f''] = \chi$, occurrences of variables from $\y'$ in $\bar{\chi}'$ correspond to occurrences of variables from $\y''$ in $\bar{\chi}''$.
  Let $\sim$ be the equivalence relation on $\y',\y''$ generated by setting $y'_i \sim y''_j$ whenever some occurrence of $y'_i$ corresponds to some occurrence of $y''_j$

  As a finitely generated equivalence relation on a finite set, $\sim$ is decidable, so its quotient is finite, and can be represented by some fresh variables $\y$, with quotient map $q : \y',\y'' \to \y$.
  Now $f' \cup f''$ factors uniquely through $q$ as $f : \y \to C$, and $\chi'[q] = \chi''[q]$, so we are done, concluding
  \begin{mathpar}
    \inferrule
      {\inferrule{\bar{\varphi}' \proves_{\x,\y'} \bar{\chi}'}
        {\bar{\varphi}'[q] \proves_{\x,\y} \bar{\chi}'[q] } 
      \\ \inferrule{\bar{\chi}'' \proves_{\x,\y''} \bar{\psi}''}
        {\bar{\chi}''[q] \proves_{\x,\y} \bar{\psi}''[q] } }
      { \bar{\varphi}'[q] \proves_{\x,\y} \bar{\psi}''[q]  }.
  \end{mathpar}
The $\land$-introduction and $\lor$-elimination rules are analogous.
\end{proof}

We remark that this proof adapts directly to other standard forms of sequent calculus and natural deduction (including versions with proof-terms), and indeed to other finitary extensions of predicate logic (e.g.\ by modal operators and corresponding rules). 

\subtleparagraph A typical application of \zcref{lemma: replacing constants with variables} is in analysing provability over diagrams of models, where having added constants to the signature, one may wish to re-abstract them.

\begin{definition}
  Let $A$ be a $\Sigma$-structure.
  Then $\Sigma + \domain{A}$ is the signature obtained by adding constants to $\Sigma$ for all elements of $A$ (i.e.\ with $(\Sigma + \domain{A})_\Fun = \Sigma_\Fun + \domain{A}$), and $\Diag{A}$, the \emph{diagram} of $A$, is the theory over $\Sigma + \domain{A}$ consisting of the sequents $\ltrue \proves \varphi(\a)$, for each atomic predicate instance $\varphi(\a)$ that holds in $A$.
\end{definition}

\begin{proposition}
  For any regular formula $\varphi(\x)$ and $\a \in A^\x$, $A \believes \varphi(\a)$ if and only if $\ltrue \proves^{\Diag{A}} \varphi(\a)$.
\end{proposition}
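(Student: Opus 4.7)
The plan is to prove both directions separately, by induction on the regular formula $\varphi$.

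For the ``only if'' direction, assume $A \believes \varphi(\a)$ and induct on $\varphi$. When $\varphi(\x)$ is atomic, the ground atomic sentence $\varphi(\a)$ over $\Sigma + \domain{A}$ holds in $A$ under the canonical interpretation $a \mapsto a$ of the added constants, so $\ltrue \proves \varphi(\a)$ is essentially an axiom of $\Diag{A}$ --- modulo using the equational atomic axioms of the diagram to evaluate any function terms $f(\a)$ to their values in $A$. For $\ltrue$ and $\land$, apply the corresponding introduction rules to the inductive hypothesis. For $\fins{y}\psi(\x, y)$, pick a witness $b \in \domain{A}$ with $A \believes \psi(\a, b)$, apply the inductive hypothesis to obtain $\ltrue \proves^{\Diag{A}} \psi(\a, b)$, and conclude by $\exists$-introduction.

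The ``if'' direction is just soundness of regular logic: interpret $A$ as a $\Sigma + \domain{A}$-structure via $a \mapsto a$; by the very definition of $\Diag{A}$ this interpretation validates every axiom of the diagram, and hence by soundness also every derivable consequence, so $\ltrue \proves^{\Diag{A}} \varphi(\a)$ forces $A \believes \varphi(\a)$.

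The proof is routine and does not directly invoke Lemma~\ref{lemma: replacing constants with variables}; the lemma is instead what enables refined re-abstraction consequences of the proposition, whereby from a derivation $\ltrue \proves^{\Diag{A}} \varphi(\a)$ one extracts an abstracted derivation over $\Sigma$ in which the occurrences of diagram constants have been replaced by fresh variables, recording exactly which identifications among the elements of $\a$ are forced by the derivation. The only subtle point in the present proof itself is the atomic case over the possibly non-discrete signature $\Sigma + \domain{A}$, which is unproblematic since the diagram supplies enough equalities to reduce any ground atomic sentence to an explicit axiom.
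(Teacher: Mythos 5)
Your proof is correct and is essentially the argument the paper has in mind: the paper merely compresses your structural induction by first normalising $\varphi$ to an equivalent $\fins{\y}\psi(\x,\y)$ with $\psi$ Horn, after which the forward direction is exactly your atomic/conjunction/witness steps and the converse is soundness. Your handling of the atomic case over the non-discrete signature (reducing ground terms via the diagram's equational axioms) is the right observation and matches the intended reading of $\Diag{A}$.
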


\begin{proof}
  Immediate once $\varphi(\x)$ is replaced by some equivalent $\fins{\y}{\psi(\x,\y)}$, with $\psi$ Horn.
\end{proof}

\begin{lemma}\label{lemma: getting rid of constants from the model}
  Let $A$ be a $\Sigma$-structure, $\theory$ a theory over $\Sigma$.
  Suppose $\psi(\x)$, $\varphi(\x,\y)$ are (arbitrary first-order) formulas over $\Sigma$, and $\a \in A^\y$, such that $\varphi(\x,\a) \proves^{\theory \cup \Diag{A}}_\x \psi(\x)$.

  Then there is some regular formula $\chi(\y)$ over $\Sigma$, such that $\top \proves^{\Diag{A}} \chi(\a)$ (equivalently, $A \believes \chi(\a)$), and $\chi(\y) \land \varphi(\x,\y) \proves^\theory_{\x,\y} \psi(\x)$.
\end{lemma}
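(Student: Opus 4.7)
The plan is to apply Lemma~\ref{lemma: replacing constants with variables} to the given derivation, after first absorbing the diagram axioms it invokes into the hypothesis. Let $D$ be the given derivation of $\varphi(\x, \a) \proves^{\theory \cup \Diag{A}}_\x \psi(\x)$. Since $D$ is finite, it uses only finitely many axioms $\top \proves \rho_j(\b_j)$ ($j = 1, \ldots, k$) from $\Diag{A}$. Setting $\chi_0 := \bigwedge_j \rho_j(\b_j)$, a closed Horn formula over $\Sigma + \domain{A}$, I would first rewrite $D$ into a derivation in $\theory$ alone of $\chi_0 \land \varphi(\x, \a) \proves^\theory_\x \psi(\x)$, replacing each appeal to a diagram axiom with extraction of the corresponding conjunct from the new hypothesis $\chi_0$.

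Next, I would apply Lemma~\ref{lemma: replacing constants with variables} with $C = \domain{A}$ to this rewritten derivation, obtaining fresh variables $\w$, a function $f : \w \to \domain{A}$, and a $\theory$-derivation
\[ \bar\chi_0(\w) \land \bar\varphi(\x, \w) \proves^\theory_{\x, \w} \psi(\x), \]
where $\bar\chi_0$ and $\bar\varphi$ are obtained from $\chi_0$ and $\varphi(\x, \a)$ by replacing each occurrence of a constant in $\domain{A}$ by its abstracted variable in $\w$, and $f$ records which constant each such $w$ abstracts. For each occurrence of some $a_i$ in $\varphi(\x, \a)$ (equivalently, each occurrence of $y_i$ in $\varphi(\x, \y)$) that was abstracted to some $w_o \in \w$, I form the equation $w_o = y_i$; let $E(\w, \y)$ be the conjunction of these. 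By the Leibniz substitution rule, $E \land \bar\varphi(\x, \w)$ and $E \land \varphi(\x, \y)$ are interderivable, so from the abstracted derivation above I obtain
\[ \bar\chi_0(\w) \land E(\w, \y) \land \varphi(\x, \y) \proves^\theory_{\x, \w, \y} \psi(\x). \]

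I then set $\chi(\y) := \fins{\w}\,(\bar\chi_0(\w) \land E(\w, \y))$, a regular formula over $\Sigma$. By $\exists$-elimination, $\chi(\y) \land \varphi(\x, \y) \proves^\theory_{\x, \y} \psi(\x)$, as desired. To verify that $\top \proves^{\Diag{A}} \chi(\a)$, I witness $\w$ in the existential block by $f(\w)$: then $\bar\chi_0[f/\w] = \chi_0$ is a conjunction of $\Diag{A}$-axioms, and each equation of $E$ becomes $f(w_o) = a_i$, which holds by construction, since $w_o$ was introduced as the abstraction of an occurrence of $a_i$.

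The main delicate point is the bookkeeping just described: tracking, through the inductive proof of Lemma~\ref{lemma: replacing constants with variables}, which variables $w \in \w$ arise from which occurrences of which $a_i$ in $\varphi(\x, \a)$. The abstractions produced by that proof may identify variables at the cut, $\land$-introduction, and $\lor$-elimination rules, so the correspondence between $\w$ and $a_i$-occurrences is many-to-one in general; the resulting equations in $E$ will then impose any $y_i = y_j$ constraints that the derivation implicitly required, which remains compatible with the verification of $\chi(\a)$ under $\Diag{A}$ by reflexivity.
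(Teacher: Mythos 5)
Your proposal is correct and follows essentially the same route as the paper's own proof: conjoin the finitely many diagram axioms used into a single Horn hypothesis, abstract the constants via Lemma~\ref{lemma: replacing constants with variables}, tie the abstracted variables $\w$ back to $\y$ by a conjunction of equations at corresponding occurrences, and existentially quantify $\w$ to form $\chi(\y)$, witnessing $\chi(\a)$ by $f$. Your closing remark on the many-to-one bookkeeping between $\w$ and occurrences of the $a_i$ is exactly the point the paper handles with its formula $\rho(\y,\w)$, and your treatment of it is sound.
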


\begin{proof}
  Fix some derivation of $\varphi(\x,\a) \proves^{\theory \cup \Diag{A}}_\x \psi(\x) $.
  Let $\xi$ be the conjunction of all sentences $\sigma$ (over $\Sigma + \domain{A}$) occurring in axioms $\top \proves \sigma$ from $\Diag{A}$ in the derivation.
  Then $\xi \land \varphi(\x,\a) \proves^{\theory}_{\x}\psi(\x)$ is derivable.
  
  By \zcref{lemma: replacing constants with variables}, we can abstract this to a derivation of some sequent  $\bar{\xi}(\w) \land \bar{\varphi}(\x,\w) \proves^\theory_{\x,\w} \psi(\x)$ just over $\Sigma$, along with $f : \w \to A$ such that $\bar{\xi}(\w)[f]=\xi$ and $\bar{\varphi}(\x,\w)[f]=\varphi(\x,\a)$.
  Without loss of generality, $\w$ is disjoint from $\y$.
  Let $\rho(\y,\w)$ be the conjunction of equalities $y_i=w_j$, for $y_i$ and $w_j$ occurring in corresponding places of $\varphi$ and $\bar{\varphi}$.
  Then $\rho(\y,\w) \land \varphi(\x,\y) \proves^{\emptyset}_{\x,\y,\w} \bar{\varphi}(\x,\w)$.

  But now $\fins{\w}{(\rho(\y,\w) \land \bar{\xi}(\w))} \land \varphi(\x,\y) \proves^\theory_{\x,\y} \psi(\x)$, 
  and $f$ witnesses that $\top \proves^{\Diag{A}} \fins{\w}{(\rho(\a,\w) \land \bar{\xi}(\w))}$;
  so we are done.
\end{proof}

\section{Conservativity and completeness results for regular theories} \label{sec:conservativity}

Let $\theory$ be a regular theory over a signature $\Sigma$.
Classically, $\Mod{\theory}$ is a weakly reflective subcategory of ${\Str{\Sigma}}$ (see e.g.\ Ad\'a{}mek, Rosick\'y~\cite[Thm.~4.8, Ex.~5.e]{adamekandrosicky:94}).
Indeed, it is functorially so: every $\Sigma$-structure has a natural weakly reflective embedding $A \to W_\theory(A)$ into a model of $\theory$.
The standard proofs involve choice, and we will show in \zcref{section: reflectivity and choice principles} that this is unavoidable.

In this section, we show (constructively) a slightly weaker property, which nonetheless suffices for many applications: every $\Sigma$-structure has a natural \emph{$\theory$-conservative} embedding $\eta_A : A \to \chase[\theory]{A}$ into a model of $\theory$.

\begin{definition}\label{definition: T-conservative}
  For any $\Sigma$, $\theory$, a homomorphism of structures $f : A \to B$ is \emph{$\theory$-conservative} (w.r.t.~regular formulas) if, for every regular formula $\varphi(\x)$ and any elements $\a \in A^{l(\x)}$ such that $B \believes \varphi(f(\a))$, there is some regular formula $\psi(\x)$ such that $A \believes \psi(\a)$ and $\psi \proves^{\theory}_{\x} \varphi$.
\end{definition}

\begin{proposition}\label{proposition: T-conservative}
  Identity morphisms are $\theory$-conservative.
  Composites of $\theory$-conservative morphisms are $\theory$-conservative.
  \qed
\end{proposition}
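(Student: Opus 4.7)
The plan is to verify both parts directly from the definition by simple diagram-chasing; no subtlety should arise, since the notion of $\theory$-conservativity is set up to make these closure properties immediate.

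For the first claim, given the identity $\operatorname{id}_A : A \to A$, suppose $\varphi(\x)$ is regular and $A \believes \varphi(\a)$. Then the witnessing formula $\psi$ required by Definition~\ref{definition: T-conservative} can simply be taken to be $\varphi$ itself: it is regular, it holds of $\a$ in $A$ by assumption, and $\varphi \proves^\theory_\x \varphi$ by the (structural) identity axiom.

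For the second claim, let $f : A \to B$ and $g : B \to C$ be $\theory$-conservative, let $\varphi(\x)$ be regular, and suppose $\a \in A^{l(\x)}$ satisfies $C \believes \varphi(g(f(\a)))$. First I would apply $\theory$-conservativity of $g$ to the elements $f(\a) \in B^{l(\x)}$, yielding a regular formula $\psi(\x)$ with $B \believes \psi(f(\a))$ and $\psi \proves^\theory_\x \varphi$. Then I would apply $\theory$-conservativity of $f$ to $\psi$ and $\a$, yielding a regular formula $\chi(\x)$ with $A \believes \chi(\a)$ and $\chi \proves^\theory_\x \psi$. A single cut gives $\chi \proves^\theory_\x \varphi$, so $\chi$ witnesses $\theory$-conservativity of $g \circ f$ at $\a$ and $\varphi$.

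There is no real obstacle: the definition of $\theory$-conservativity was designed precisely so that the witnessing formulas compose under entailment, and regularity is preserved by the two applications because each instance of conservativity already produces a regular witness. The proof is thus a two-step unfolding of definitions followed by the cut rule.
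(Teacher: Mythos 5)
Your proof is correct and is exactly the routine unfolding of Definition~\ref{definition: T-conservative} that the paper intends: the paper omits the proof entirely (marking the proposition with \verb|\qedhere|) precisely because taking $\psi=\varphi$ for identities and chaining the two witnesses via cut for composites is immediate. Nothing to add.
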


\begin{definition}\label{definition: chase functor}
  Let $\theory$ be a regular theory over a signature $\Sigma$.
  A \emph{chase functor} for \theory\ is a  functor $\chasef{\theory} : \Str{\Sigma} \to \Mod{\theory}$ together with a natural transformation $\eta:1_{\Str{\Sigma}} \to I \cdot \chasef{\theory}$ (where $I$ is the inclusion $\Mod{\theory}\into \Str{\Sigma}$),  such that $\eta_A$ is $\theory$-conservative for every $A$.
\end{definition}

We borrow the term ``chase'' from the forward chaining algorithm of database theory of that name, as in e.g.\ Abiteboul, Hull, Vianu~\cite[\textsection 8.4]{abiteboulhullvianu:95}.
%
%
%
Our construction can be seen as an adaptation of that method, or of the categorical \emph{small-object argument} as given in e.g.\ Ad\'a{}mek et al~\cite{adamek-herrlich-rosicky-tholen}.

The basic step of the traditional chase construction is the ``axiom-induced'' extension of a structure.
Let $\sigma=(\varphi(\x) \proves_{\x}\fins{\alg{y}}\psi(\x,\y))$ be a sequent, let $M$ be a structure, and let $\alg{m}\in M$ such that $M\vDash \varphi[\alg{m}/\x]$.
Then “the extension of $M$ induced by $\sigma$ and $\alg{m}$” is the structure obtained by adding new elements $\alg{b}$ corresponding to the variables $\y$, and updating the predicates just as required to make $\psi(\a,\b)$ hold.
The map from $M$ to this extension is evidently weakly orthogonal to models of $\sigma$.

If $M$ is enumerable, or more generally well-orderable, then (classically) one can iterate this construction over all possible arguments $\alg{m}$, eventually obtaining a weak reflection $h:M\to<100> \chase[\sigma]{M}$ into a model of the sequent $\sigma$; and similarly for any well-orderable set of sequents.

In the current setting, where theories and structures need not be finite, enumerable, or even discrete, this one-at-a-time approach is insufficient.
Instead, at each step, we simultaneously adjoin new elements for all possible applications of axioms of $\theory$ to arguments in the structure (as in the small-object argument).
We then iterate this step $\omega$-many times, to obtain a model of $\theory$.

The tricky part is proving conservativity.
For this, we perform the construction directly just in the case where $\Sigma$ is purely relational (i.e.\ has no constants or function symbols), and $\theory$ does not mention equality.
This renders analysis of the basic extension step more tractable.
We then obtain the general version from this restricted case, by means of elimination of equality and function symbols in the syntax.

\begin{definition} \label{def:one-step}
  Let $\Sigma$ be a purely relational signature, and $\theory$ a regular normal equality-free theory over $\Sigma$.

  For a $\Sigma$-structure $A$, the \emph{one-step $\theory$-extension} of $A$, denoted $\onestep[\theory]{A}$ or just $\onestep{A}$, is the $\Sigma$-structure defined as follows:
  \begin{enumerate}
  \item $\domain{\onestep{A}}$ is the disjoint union of $\domain{A}$ with the set of all triples $(\tau,\a,j)$, where $\tau = (\varphi \proves_x \fins \y \psi)$ is an axiom of $\theory$, $\a \in \interp[A]{ \x\ |\ \varphi }$, and $0 \leq j < l(\y)$.
  Write $\iota_A : \domain{A} \to \domain{\onestep{A}}$ for the inclusion.

  \item For each predicate symbol $R$, $\interp[\onestep{A}]{R}$ consists of $\iota_A(\interp[A]{R})$, together with for each axiom $\tau = (\varphi \proves_{\x} \fins \y \psi)$, each $\a \in \interp[A]{ \x\ |\ \varphi }$, and each occurrence of $R$ as a conjunct $R(\t(\x,\y))$ in $\psi$, the tuple $\interp[\onestep{A}]{\x,\y\ldotp\t}(\iota(\a),\b)$, where $\b$ is the \emph{canonical witnessing tuple} $((\tau,\a,i)\ |\ 0 \leq i < l(\y)) \in \onestep{A}^{\x,\y}$.
 \end{enumerate}
 
 Moreover, this is evidently functorial in $A$; and $\iota_A$ gives a natural homomorphism $A \to \onestep{A}$.
\end{definition}

Categorically, $\onestep{A}$ is the pushout of $A$ with a coproduct of copies of the structure inclusions $\syntstr{\x}{\varphi} \to \syntstr{\x,\y}{\psi}$ representing the axioms of $\theory$ (cf.\ $C(K)$ in Ad\'a{}mek et al~\cite[II.4]{adamek-herrlich-rosicky-tholen}). 
Equality-freeness of $\theory$ ensures that this pushout can be presented simply as a disjoint union, with no quotienting required.

Some terminology will be useful for working explicitly in $\onestep{A}$.
Given a newly adjoined element $(\tau,\a,j)$, call $\tau$ the \emph{justification} for this element, $\a$ its \emph{arguments}, and $j$ its \emph{index}.

Note that whenever an instance of a basic predicate holds in $\onestep{A}$, either it already holds in $A$, or else all new elements occurring in it have the same justification $\tau$ and arguments $\a$, and the instance comes from the conclusion of $\tau$ applied to $\a$.

\begin{definition}
  ($\Sigma$ relational, $\theory$ regular normal equality-free.)

  For a $\Sigma$-structure $A$, the \emph{$\theory$-chase of $A$}, written $\chase[\theory]{A}$, is the colimit of the sequence of structures
  \[ A \to^\iota \onestep{A} \to^\iota \onestep{\onestep{A}} \to^\iota \cdots \]

  Concretely, since each $\iota$ is a complemented inclusion, $\chase{A}$ may be taken to consist of pairs $(i,x)$, where $i \in \N$, and $x \in \severalsteps[i]{A} \setminus \severalsteps[i-1]{A}$ for $i>0$, or $x \in A$ in case $i = 0$.
  Then $\chase{A} \believes R((i_1,x_1),\ldots,(i_r,x_r))$ just when there is some $j \geq i_1, \ldots, i_t$ such that $\severalsteps[j]{A} \believes R(\iota^{j - i_1}(x_1), \ldots, \iota^{j - i_r}(x_r))$.

  This too is functorial in $A$, and the colimit inclusions $\nu_i : \severalsteps[i]{A} \to \chase{A}$ are natural.
  We write $\eta_A : A \to \chase{A}$ for the $0$th such inclusion $\nu_0$.
\end{definition}

\begin{proposition}
  ($\Sigma$ relational, $\theory$ regular normal equality-free.)
  $\chase{A}$ is a model of $\theory$.
\end{proposition}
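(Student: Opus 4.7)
The plan is to show directly that every axiom $\tau = (\varphi(\x) \proves_\x \fins{\y} \psi(\x,\y))$ of $\theory$ is satisfied in $\chase{A}$. So I fix a tuple $\c \in \chase{A}^\x$ with $\chase{A} \believes \varphi(\c)$, and aim to produce witnesses $\d \in \chase{A}^\y$ with $\chase{A} \believes \psi(\c,\d)$.

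The first step is to transport the premise to a single finite stage. Since $\x$ is finite, all entries of $\c$ lie in the image of some $\nu_i$; and since $\varphi$ is a finite conjunction of atomic formulas, the concrete description of the predicates on $\chase{A}$ (as unions indexed by $j \in \nat$, as spelt out in the definition) lets me find, by taking a maximum over the finitely many conjuncts, a single stage $j \geq i$ and a preimage $\c'' \in \severalsteps[j]{A}^\x$ with $\severalsteps[j]{A} \believes \varphi(\c'')$.

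Once at this finite stage, the next step is to read off witnesses directly from Definition~\ref{def:one-step}. Applied with axiom $\tau$ and arguments $\c''$, the one-step extension $\severalsteps[j+1]{A} = \onestep{\severalsteps[j]{A}}$ adjoins exactly the canonical witnessing tuple $\b = ((\tau,\c'',k))_{0 \leq k < l(\y)}$, and by construction every atomic conjunct $R(\t(\x,\y))$ of $\psi$ gives rise to a tuple in $\interp[\onestep{\severalsteps[j]{A}}]{R}$. Hence $\severalsteps[j+1]{A} \believes \psi(\iota(\c''), \b)$. Pushing forward via $\nu_{j+1}$ and using naturality of the colimit inclusions, I take $\d := \nu_{j+1}(\b)$ and obtain $\chase{A} \believes \psi(\c, \d)$.

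The only delicate point is the first step: checking that a Horn formula satisfied in the colimit is already satisfied at some finite stage, with a specific preimage. This relies on equality-freeness (so $\varphi$ contains no atoms $s=t$ whose interpretation in the colimit would demand extra care), on the fact that each $\iota$ is a complemented inclusion (so the concrete pair-representation of $\chase{A}$ is available), and above all on $\varphi$ being a finite conjunction of atomic formulas over a finite context, so that only finitely many atomic facts must be assembled and a maximum over their witnessing stages is definable without any choice. Everything is indexed by finite or decidable sets, so the argument is constructive throughout.
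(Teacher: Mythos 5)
Your proposal is correct and follows essentially the same route as the paper's proof: pull the premise back to a finite stage $\severalsteps[j]{A}$ by finitariness, then observe that the one-step extension $\severalsteps[j+1]{A}$ contains the canonical witnessing tuple for $\tau$ applied to that preimage, and push forward along the colimit inclusion. The extra detail you give on why the finite-stage step is constructively unproblematic is a reasonable elaboration of what the paper leaves implicit.
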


\begin{proof}
  For each axiom $\tau = (\varphi(\x) \proves_{\x} \fins \y \psi(\x,\y))$ of $\theory$, suppose $\a \in \chase{A}^{l(\x)}$, and $\chase{A} \believes \varphi(\a)$.
  Then there is some $n$ and some $\b \in \severalsteps[n]{A}^{l(\x)}$ such that $\nu_n(\b) = \a$, and $\severalsteps[n]{A} \believes \varphi(\b)$.
  Now by construction, $\severalsteps[n+1]{A} \believes \psi(\iota(\b),(\tau,\b,i)_{1 \leq i \leq l(\y)})$; so $\chase{A} \believes \psi(\a,\nu_{n+1}(\tau,\b,i)_{1 \leq i \leq l(\y)})$, and so $\chase{A}$ validates the conclusion of $\tau$.
\end{proof}

Recall \zcref{definition: T-conservative} of $\theory$-conservativity for homomorphisms.
We will show in \zcref{prop:iota-T-conservative} that $\iota_A$ is $\theory$-conservative; first we establish a restricted special case.

\begin{lemma}\label{lemma: easycase}
  ($\Sigma$ relational, $\theory$ regular normal equality-free.)  Suppose:
  \begin{enumerate}
  \item $\varphi(\x,\y)$ is some non-empty conjunction of atomic formulas, each containing some variable from $\y$;
  \item $\a \in A^{\x}$, $\b \in \onestep{A}^{\y}$;
  \item all elements of $\b$ are in $\onestep{A} \setminus A$, and have the same justification and argument;
  \item $\onestep{A} \believes \varphi(\iota(\a),\b)$.
  \end{enumerate}
  Then there is some regular formula $\psi(\x)$ such that $A \believes \psi(\a)$ and $\psi(\x) \proves_{\x}^{\theory} \fins{ \y} \varphi(\x,\y)$.
\end{lemma}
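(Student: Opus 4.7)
The plan is to exploit the rigidity of $\onestep{A}$. Since every conjunct of $\varphi$ contains a $\y$-variable and every element of $\b$ is new with a common justification $\tau$ and arguments, the remark after Definition~\ref{def:one-step} forces each atom of $\varphi(\iota(\a), \b)$ to come from a conjunct of the conclusion of $\tau$ applied to those arguments. Unpacking: write $\tau = (\varphi'(\x') \proves_{\x'} \fins{\y'} \psi'(\x', \y'))$ with $\cc \in \interp[A]{\x' \mid \varphi'}$, so $\b_n = (\tau, \cc, j_n)$ for certain indices $j_n$. For each conjunct $R_i(\t_i(\x, \y))$ of $\varphi$ (where $\t_i$ is a tuple of variables, since $\Sigma$ is relational) I fix a matching conjunct $R_i(\s_i(\x', \y'))$ of $\psi'$ realising this instance. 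Position-by-position comparison in $\onestep{A}$, using disjointness of $\iota(A)$ from the new elements and injectivity of the tagging $n \mapsto (\tau, \cc, n)$, yields: wherever $\t_i$ has $x_m$, $\s_i$ has some $x'_\ell$ with $\a_m = \cc_\ell$; wherever $\t_i$ has $y_n$, $\s_i$ has precisely $y'_{j_n}$.

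Let $L$ be the finite set of pairs $(m, \ell)$ for which some position $(i, k)$ has $\t_i(k) = x_m$ and $\s_i(k) = x'_\ell$, and take
\[ \psi(\x) \;:=\; \fins{\x'} \Bigl( \varphi'(\x') \,\land\, \bigwedge_{(m,\ell) \in L} x_m = x'_\ell \Bigr), \]
a regular formula. The tuple $\cc$ witnesses $A \believes \psi(\a)$: $A \believes \varphi'(\cc)$ holds by definition of the axiom instance, and $\a_m = \cc_\ell$ whenever $(m, \ell) \in L$. For the derivation $\psi \proves^\theory_\x \fins{\y} \varphi$, open the outer existential, apply $\tau$ to introduce $\y'$ with $\psi'(\x', \y')$, extract each conjunct $R_i(\s_i(\x', \y'))$, and $\exists$-introduce $\y$ with witness $(y'_{j_n})_n$. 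At each position $k$, either $\t_i(k) = y_n$ and $\s_i(k) = y'_{j_n}$ so the two agree after the substitution, or $\t_i(k) = x_m$ and $\s_i(k) = x'_\ell$ with $x_m = x'_\ell$ available, so intuitionistic equality substitution converts $R_i(\s_i(\x', \y'))$ into $R_i(\t_i(\x, (y'_{j_n})_n))$.

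The principal subtlety is the position-by-position matching itself. A single $x'_\ell$ may be linked to several distinct $x_m$'s, and a single $x_m$ may be linked to several distinct $x'_\ell$'s at different occurrences; constructively we cannot collapse $L$ to a single substitution $\sigma : \x' \to \x$, since identifications would require deciding equalities like $\a_m = \a_{m'}$. Encoding $L$ as a conjunction of equalities in $\psi$ sidesteps this, since the equality rules let one rewrite occurrences independently, treating $L$ as a relation rather than a function. Regular formulas may use equality even when $\theory$ itself is equality-free, so $\psi$ is admissible in the conclusion.
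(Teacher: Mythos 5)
Your proposal is essentially the paper's own proof: the same case analysis of positions (an $\x$-position of $\t_i$ must match an $\x'$-position of $\s_i$ with equal interpretations in $A$, a $\y$-position must match exactly the $\y'$-position given by the index map $n \mapsto j_n$), the same witness formula $\psi(\x) = \fins{\x'}\bigl(\varphi'(\x') \land \bigwedge_{(m,\ell)\in L} x_m = x'_\ell\bigr)$ (the paper's $\fins{\z}\,\rho(\x,\z)\land\chi(\z)$), and the same derivation via the axiom followed by occurrence-wise equality rewriting; your observation that $L$ must be kept as a relation rather than collapsed to a substitution is exactly the paper's point in taking $\rho$ to be a conjunction of equalities. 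The one omission is that you treat every conjunct of $\varphi$ as a relation atom $R_i(\t_i)$, whereas the lemma allows \emph{equality} atoms among the conjuncts --- and this case genuinely arises, since the lemma is later applied after padding $\varphi'$ with reflexivity instances $y=y$ so that every $\y$-variable occurs. The patch is one line (as in the paper): an equality conjunct containing a $\y$-variable can only be of the form $y_n = y_{n'}$ (a conjunct $x_m = y_n$ is unsatisfiable since $\iota(A)$ is disjoint from the new elements), and its truth in $\onestep{A}$ forces $j_n = j_{n'}$, so after substituting the witnesses $y'_{j_n}$ it becomes an instance of reflexivity.
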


\begin{proof}
Take $\tau = (\chi \proves_{\z} \fins{\w} \zeta)$ to be the shared justification of the elements of $\b$, and $\cc \in A^{\z}$ their shared argument.
Write $\idx(b)$ for the index of $b$.

Write $\dd$ for their canonical witness tuple $(\tau,\cc,j)_{j \in \w} \in \onestep{A}^{\w}$.
Define $f : \y \to \w$ by $i \mapsto \idx(b_i)$.

Consider any non-equality atomic conjunct $R(\t(\x,\y))$ of $\varphi$, where $\t \in \Tm{\x,\y}^{\arity{R}}$.
We know $R(\t(\x,\y))$ contains some variable from $\y$; so its interpretation in $\onestep{A}$ must arise from $\tau$ applied to $\cc$.
That is, there is some conjunct of $\zeta(\z,\w)$ of the form $R(\s(\z,\w))$, where $\s \in \Tm{\z,\w}^{\arity{R}}$, such that $\interp[\onestep{A}]{\s(\z,\w)}(\iota(\cc),\dd) = \interp[\onestep{A}]{\t(\x,\y)}(\iota(\a),\b)$.

Now certainly $\zeta(\z,\w) \land \conj_{i \in \arity{R}} t_i(\z,\w) = s_i(\x,\y) \proves_{\x,\y,\z,\w} R(\t(\x,\y))$.

For each $i\in \arity{R}$, we have $\interp[\onestep{A}]{t_i(\x,\y)}(\iota(\a),\b) = \interp[\onestep{A}]{s_i(\z,\w)}(\iota(\cc),\dd)$.
Since each term is a variable, and the images of $\b,\dd$ are distinct from $\iota(\a),\iota(\cc)$, either $t_i(\x,\y)$ is of the form $x_j$ while $s_i(\z,\w)$ is of the form $z_k$, or else $t_i(\x,\y)$ is of the form $y_j$ while $s_i(\z,\w)$ is exactly $w_{f(j)}$.

So let $\epsilon(\x,\z)$ be the conjunction, over all $i$ such that $t_i(\x,\y)$ is some $x_j$, of the equalities $t_i(\x) = s_i(\z)$.
Then $\zeta(\z,\w) \land \epsilon(\x,\z)\land \conj_{1\leq i\leq l(\y)} y_i = w_{f(i)} \proves_{\x,\y,\z,\w} R(\t(\x,\y))$; and since $\iota$ is injective, $A \believes \epsilon(\a,\cc)$.

Take $\rho(\x,\z)$ to be the conjunction of these formulas $\epsilon(\x,\z)$ over all non-equality atomic conjuncts of $\varphi$.
Then 
\[ \rho(\x,\z) \land \zeta(\z,\w) \land \conj_{1\leq i\leq l(\y)} y_i = w_{f(i)} \proves_{\x,\y,\z,\w} \varphi(\x,\y): \]
the left-hand side implies all non-equality conjuncts of $\varphi$ by the construction of $\rho$, and all equality conjuncts since if $b_i = b_j$ then $f(i) = f(j)$.
Meanwhile, $\onestep{A} \believes \rho(\iota(\a),\iota(\cc))$, so since $\iota$ is an injection and $\rho$ a conjunction of equalities, $A \believes \rho(\a,\cc)$.

Finally, take the desired formula $\psi(\x)$ to be $\fins{ \z} \rho(\x,\z) \land \chi(\z)$.
Then $A \believes \psi(\a)$, witnessed by $\cc$, and moreover
\opt{arxiv}{\begin{align*}
  \psi(\x) & \proves_{\x}^{\theory} \exists z,\w\ \rho(\x,\z) \land \zeta(\z,\w) \\
  & \proves_{\x}^{\theory} \textstyle \exists \z,\w,\y\  \rho(\x,\z) \land \zeta(\z,\w) \land \conj_{i} y_i = w_{f(i)}\\
  & \proves_{\x}^{\theory} \exists \z,\w,\y \ \varphi(\x,\y) \\
  & \proves_{\x}^{\theory} \exists \y\ \varphi(\x,\y). \qedhere
\end{align*}}%
\opt{jsl}{%
\begin{endproofeqnarray*}
  \psi(\x) & \proves_{\x}^{\theory} & \exists z,\w\ \rho(\x,\z) \land \zeta(\z,\w) \\
  & \proves_{\x}^{\theory} & \textstyle \exists \z,\w,\y\  \rho(\x,\z) \land \zeta(\z,\w) \land \conj_{i} y_i = w_{f(i)}\\
  & \proves_{\x}^{\theory} & \exists \z,\w,\y \ \varphi(\x,\y) \\
  & \proves_{\x}^{\theory} & \exists \y\ \varphi(\x,\y).
\end{endproofeqnarray*}
}%
\end{proof}

\begin{proposition} \label{prop:iota-T-conservative}
  ($\Sigma$ relational, $\theory$ regular normal equality-free.)
  The map $\iota : A \to \onestep{A}$ is $\theory$-conservative, for every $\Sigma$-structure $A$.
\end{proposition}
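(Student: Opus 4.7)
The plan is to reduce to Lemma~\ref{lemma: easycase} by decomposing a witness for $\varphi$ into clusters of newly-adjoined elements sharing a common justification, together with a residue of atomic conjuncts supported inside $\iota(A)$ --- each of which in turn admits a regular ``explanation'' in $A$.

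Normalise $\varphi(\x)$ to $\fins{\y}\chi(\x,\y)$ with $\chi$ a finite conjunction of atomic formulas; the hypothesis $\onestep{A} \believes \varphi(\iota(\a))$ then supplies a witness $\b \in \onestep{A}^{\y}$ with $\onestep{A} \believes \chi(\iota(\a),\b)$. Since $\iota(A)\subseteq \onestep{A}$ is a complemented inclusion, the subset $\y_0 := \{i \in \y : b_i \in \iota(A)\}$ of the finite context $\y$ is decidable; for $i \in \y_0$ write $b_i = \iota(a'_i)$, and set $\y_1 := \y \setminus \y_0$. Split $\chi$ decidably into $\chi_A$ (conjuncts mentioning no variable from $\y_1$) and $\chi_\mathrm{new}$ (the rest). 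On $\chi_\mathrm{new}$, I would form clusters via the equivalence relation on its conjuncts generated by ``share a variable from $\y_1$''; each conjunct of $\chi_\mathrm{new}$ contains some $\y_1$-variable, hence some new element, and by the description of $\onestep{A}$ the new elements occurring in any single atomic fact share a common justification and argument, so sharing a $\y_1$-variable propagates this through each cluster. Lemma~\ref{lemma: easycase} then applies to each cluster $C$, with $\x$ together with the $\y_0$-variables $\y_0^{(C)}$ of $C$ as its free context and $\y_1^{(C)}$ as its new variables, yielding a regular $\psi_C(\x,\y_0^{(C)})$ with $A \believes \psi_C(\a, \a'|_{\y_0^{(C)}})$ and $\psi_C \proves^{\theory} \fins{\y_1^{(C)}}\chi_C$.

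Each atomic conjunct $\alpha$ of $\chi_A$ is interpreted entirely in $\iota(A)$. Equality conjuncts transfer to $A$ by injectivity of $\iota$, so one may take $\psi_\alpha := \alpha$. For a relation conjunct $R(\t)$, the disjoint-union description of $\interp[\onestep{A}]{R}$ in Definition~\ref{def:one-step} presents membership as a disjunction: either (a) the tuple already lies in $\iota_A(\interp[A]{R})$, giving $\psi_\alpha := R(\t)$; or (b) it is axiom-induced by some axiom $\tau = (\varphi_\tau \proves_{\z} \fins{\w}\psi_\tau)$, argument $\cc \in A^{\z}$ with $A \believes \varphi_\tau(\cc)$, and conjunct $R(\t'(\z,\w))$ of $\psi_\tau$; the image lying in $\iota(A)$ then forces $\t'$ to mention no $\w$-variable, and $\psi_\alpha := \fins{\z}(\varphi_\tau(\z) \land \t(\x,\y_0)=\t'(\z))$ is a regular formula, witnessed in $A$ by $\cc$ and $\theory$-entailing $R(\t)$ via $\tau$.

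Assembling $\psi(\x) := \fins{\y_0}\bigl(\bigwedge_{\alpha \in \chi_A}\psi_\alpha \land \bigwedge_C \psi_C\bigr)$ yields a regular formula that $A$ satisfies at $\a$ and that $\theory$-provably implies $\fins{\y}\chi = \varphi$. The main obstacle is the constructive bookkeeping: decidability of the splits of $\y$ and $\chi$, and availability of the disjunctive case-split (a)/(b) for atomic conjuncts of $\chi_A$. Both rest on the complementedness of $\iota$ and the explicit disjoint-union description in Definition~\ref{def:one-step}; once those are in place, the rest is a mechanical assembly of Lemma~\ref{lemma: easycase}.
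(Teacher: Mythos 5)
Your proposal is correct and follows essentially the same route as the paper's proof: normalise to $\fins{\y}\chi$, use the complemented inclusion $\iota$ to split the witness, group the conjuncts involving new elements into clusters sharing a common justification and argument, apply Lemma~\ref{lemma: easycase} to each cluster, and handle the conjuncts supported in $\iota(A)$ via the case split afforded by the disjoint-union description of $\onestep{A}$. The only differences are cosmetic: you cluster conjuncts where the paper clusters the new variables of $\y$, and in your case (b) you existentially bind the axiom's premise $\varphi_\tau(\z)$ and re-derive the relation instance through $\tau$, which is if anything slightly more careful than the paper's formulation.
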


\begin{proof}
Let $\varphi(\x)$ be a regular formula, and $\a \in A^{l(\x)}$ be such that $\onestep{A} \believes \varphi(\iota(\a))$.
We seek some $\psi(\x)$ such that $A \believes \psi(\a)$ and $\psi(\x) \proves_{\x}^{\theory} \varphi(\x)$.

As usual, $\varphi$ is provably equivalent to some formula $\fins{\y} \varphi'$, with $\varphi'(\x,\y)$ a finite conjunction of atomic formulas.
By adjoining instances of reflexivity if necessary, we may assume that each variable of $\y$ occurs somewhere in $\varphi'$.
Now take some $\b \in \onestep{A}^{l(\y)}$ such that $\onestep{A} \believes \varphi'(\iota(\a),\b)$.

Since $\iota : A \to \onestep{A}$ is a decidable injection, we can decide which values of $\b$ are in its image, and correspondingly reorder $\y$ into the form $\x',\y'$, such that the corresponding reordering of $\b$ is of the form $\iota(\a'),\b'$, with $\a' \in A^{l(\x')}$, and $\b'$ lying entirely in the complement of $\iota$.

Now let $\sim$ be the equivalence relation on $\y'$ generated by setting $y'_i \sim y'_j$ whenever there is some atomic conjunct of $\varphi'$ in which both $y'_i$ and $y'_j$ occur.
This is decidable, so allows us to reorder $\y'$ as $\y^1,\ldots,\y^n$, where each $\y^i$ is an equivalence class of $\sim$.
Write $\b^1,\ldots,\b^n$ for the corresponding reordering of $\b'$.
By the definition of $\sim$ and the note following \zcref{def:one-step}, for each $i$, all the elements in $\b^i$ must have the same justification and argument.

By reordering quantifiers and conjuncts in $\fins{\y'} \varphi'(\x,\y)$, we now have $\varphi(\x)$ equivalent to some formula
\[ \fins{\x'} \fins{\y^1} \cdots \fins{\y^n} \varphi_0(\x,\x') \land \varphi_1(\x,\x',\y^1) \land \cdots \land  \varphi_n(\x,\x',\y^n)\]
where each $\varphi_i$ is a non-empty conjunction of atomic formulas, and for $i > 0$, each conjunct of $\varphi_i$ containing some variable from $\y^i$.

This is in turn equivalent to
\[ \fins{\x'} \left( \varphi_0(\x,\x') \land (\fins{\y^1} \varphi_1(\x,\x',\y^1)) \land \cdots \land (\fins{\y^n} \varphi_n(\x,\x',\y^n) \right)). \]
To unify the form of these conjuncts, take $\y^0$ to be the empty sequence of variables.

It is now sufficient to show that for each $0 \leq i \leq n$, there is some formula $\psi_i(\x,\x')$ such that $A \believes \psi_i(\a,\a')$, and $\psi_i \proves^{\theory}_{\x,\x'}  \fins{\y^i} \varphi_i$.
Given these, choosing some such $\psi_i$ for each $i$ and taking $\psi(\x) := \fins{\x'} \conj_i \psi_i(\x,\x')$ completes the proof.

For $i \neq 0$, this is exactly \zcref{lemma: easycase}.

In the case $i = 0$, $\y^0$ is empty and so $\fins{\y^0} \varphi_0(\x,\x',\y^0)$ is just $\varphi_0(\x,\x')$.
Here it is enough to show that for each atomic conjunct $\alpha(\x,\x')$ of $\varphi_0$, there is some $\sigma(\x,\x')$ with $A \believes \sigma(\a,\a')$ and $\sigma(\x,\x') \proves^{\theory}_{\x,\x'} \alpha(\x,\x')$.
Given this, we are done by choosing some such $\sigma$ for each $\alpha$ and taking $\psi_0$ to be their conjunction.

So, take some such $\alpha(\x,\x')$.
By the explicit description of the structure on $\onestep{A}$, the fact that $\onestep{A} \believes \alpha(\iota(\a),\iota(\a'))$ must arise either because $A \believes \alpha(\a,\a')$, or else from some axiom $\tau$, some arguments $\cc$, and some atomic conjunct of the conclusion of $\tau$ applied to $\cc$.
In the former case, we are done just by taking $\sigma$ to be $\alpha$ itself.

In the latter case, say $\tau$ is of the form $\chi \proves_{\z} \fins{\w} \zeta$, and $R(\s(\z,\w))$ the atomic conjunct of $\zeta$ giving rise to the fact that $\onestep{A} \believes \alpha(\iota(\a),\iota(\a'))$.

Now $\alpha(\x,\x')$ must itself be of the form $R(\t(\x,\x'))$, \IfFileExists{newtxmath.sty}{and with}{where} $\t(\iota(\a),\iota(\a')) = \s(\iota(\cc),(\tau,\cc,j)_{1 \leq j \leq l(\w)}),$ in $\onestep{A}$; so $\s$ must be just of the form $\s(\z)$, not mentioning $\w$, and since $\iota$ is injective, $\t(\a,\a') = \s(\cc)$ in $A$.
So taking $\rho(\x,\x',\z)$ to be the conjunction of equalities $\conj_{1 \leq j \leq \arity(R)} t_j(\x,\x') = s_j(\z)$, and taking $\sigma(\x,\x')$ to be $\fins{\z} \left( \rho(\x,\x',\z) \land R(\s(\z))\right)$, we are done.
\end{proof}

\begin{corollary} \label{cor:equality-free-chase}
  ($\Sigma$ relational, $\theory$ regular normal equality-free.)  The embedding $\eta_A : A \to \chase{A}$ is \theory-conservative, for every $\Sigma$-structure $A$; so $\chasef{\theory}$ and $\eta$ form a chase functor for $\theory$.
\end{corollary}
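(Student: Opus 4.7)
The plan is to reduce the claim to the already-established $\theory$-conservativity of the one-step map $\iota$ (Proposition~\ref{prop:iota-T-conservative}) together with Proposition~\ref{proposition: T-conservative}. Since $\chase{A}$ is the $\omega$-colimit of $A \to^\iota \onestep{A} \to^\iota \onestep{\onestep{A}} \to^\iota \cdots$, and $\eta_A = \nu_0$ factors through $\iota^n : A \to \severalsteps[n]{A}$ for every $n$, each $\iota^n$ is a composite of $n$ $\theory$-conservative maps and hence $\theory$-conservative. The main task is thus to show that whenever $\chase{A} \believes \varphi(\eta_A(\a))$ for a regular $\varphi$, the satisfaction already holds at some finite stage: that is, there is some $n$ with $\severalsteps[n]{A} \believes \varphi(\iota^n(\a))$. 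Once this is done, $\theory$-conservativity of $\iota^n$ supplies the required $\psi$.

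The reduction to a finite stage proceeds by straightforward structural induction on the regular formula $\varphi$, using the concrete description of $\chase{A}$ given in the definition. For an atomic formula, the description of the predicates in $\chase{A}$ gives the required $n$ directly. For a conjunction, one takes the maximum of the two stages supplied by the inductive hypothesis (and applies the $\iota$-weakenings, which preserve validity of atomic and hence of regular formulas). For an existential $\fins{y} \theta(\x,y)$, a witness in $\chase{A}$ is some element $(i,x)$ of $\severalsteps[i]{A}$, so applying the inductive hypothesis to $\theta$ at arguments in $\severalsteps[i]{A}$ yields a suitable stage.

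Given this finite-stage lemma, for $\chase{A} \believes \varphi(\eta_A(\a))$ we pick $n$ with $\severalsteps[n]{A} \believes \varphi(\iota^n(\a))$, and then apply $\theory$-conservativity of $\iota^n : A \to \severalsteps[n]{A}$ to obtain a regular $\psi$ with $A \believes \psi(\a)$ and $\psi \proves^\theory_\x \varphi$. Finally, functoriality of $\chasef{\theory}$ and naturality of $\eta$ are immediate from functoriality of $\onestep{(-)}$, naturality of $\iota$, and the universal property of $\omega$-colimits, while the fact that $\chase{A}$ lies in $\Mod{\theory}$ was already established in the preceding proposition. The main obstacle, such as it is, lies in verifying the finite-stage lemma carefully: one must keep track of the indices assigned to elements of $\chase{A}$ and use that the $\iota$-inclusions are complemented, so that the concrete pair-representation of elements of $\chase{A}$ really does make satisfaction of atomics reducible to some finite stage.
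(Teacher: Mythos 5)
Your proposal is correct and follows essentially the same route as the paper: reduce to $\theory$-conservativity of $\iota^n : A \to \severalsteps[n]{A}$ via closure under composition and identities (Proposition~\ref{proposition: T-conservative}), and observe that satisfaction of a regular formula in $\chase{A}$ descends to some finite stage. The paper compresses the finite-stage step into the single word ``finitariness,'' whereas you spell out the structural induction; that is just a matter of detail, not of approach.
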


\begin{proof}
Since $\theory$-conservative maps are closed under composition and identities, $\iota^n : A \to \severalsteps[n]{A}$ is $\theory$-conservative, for all $n \geq 0$.
But given $\varphi(\x)$ and $\a$ such that $\chase{A} \believes \varphi(\eta(\a))$, by finitariness there is some $n$ such that $\severalsteps[n]{A} \believes \varphi(\iota^n(\a))$, at which point we are done by conservativity of $\iota^n$.
\end{proof}

We can now ease our restrictions on $\theory$ and $\Sigma$, using first \emph{elimination of equalities} to remove “$\theory$ equality-free”, and then \emph{elimination of function symbols} to remove “$\Sigma$ purely relational”.
These techniques are standard, but formulations of them vary widely, so we recall carefully the specific versions that we require.

\begin{definition}[Elimination of equalities; cf.\ Bell, Machover~{\cite[\textsection 2.11]{bell-machover:1977}}] \label{def:elimination-of-equalities}
  Let $\Sigma$ be any signature; take $\Sigma_E$ to be the extension of $\Sigma$ by a new binary predicate symbol $E$,
  and let $\E{\Sigma}$ be the theory stating that $E$ is an equivalence relation and all other predicate symbols respect $E$.

  For a formula $\varphi$ over $\Sigma$, let $\varphi^E$ be the formula over $\Sigma_E$ given by replacing equality in $\varphi$ with $E$; conversely, for a formula $\psi$ over $\Sigma_E$, let $\backE{\psi}$ be the formula over $\Sigma$ given by replacing $E$ in $\psi$ with equality.
  These translations extend to sequents and theories in the obvious way.
\end{definition}

\begin{proposition}
  Both translations preserve provability, modulo $\E{\Sigma}$.
  That is, if $\varphi_1 \proves^{\theory}_\x \varphi_2$ (over $\Sigma$), then  $\varphi_1^E \proves^{\theory^E \cup \E{\Sigma}}_\x \varphi_2^E$;
  and if $\psi_1 \proves^{\theory \cup \E{\Sigma}}_\x \psi_2$ (over $\Sigma_E$), then  $\backE{\psi_1} \proves^{\backE{\theory}}_\x \backE{\psi_2}$.
  Also, $\backE{(\varphi^E)} = \varphi$, for any formula $\varphi$ over $\Sigma$. \qed
\end{proposition}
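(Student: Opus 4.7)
The plan is to verify all three claims by straightforward inductions, exploiting that both translations commute with the syntactic operations of first-order logic.

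I would begin with the final identity $\backE{(\varphi^E)} = \varphi$, by a routine induction on $\varphi$: the only interesting clause is an atom $t = s$, which is sent to $E(t,s)$ by $(-)^E$ and then back to $t = s$ by $\backE{(-)}$; every other syntactic former is transparent to the composite. This identity extends immediately to sequents and theories, giving $\backE{(\theory^E)} = \theory$.

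For the forward direction, I would induct on a derivation of $\varphi_1 \proves^{\theory}_{\x} \varphi_2$. Because $(-)^E$ commutes with connectives, quantifiers, and substitution, every structural, logical, and substitution step translates verbatim, yielding a derivation of the same shape in the target calculus. Axioms of $\theory$ translate to axioms of $\theory^E$ by definition. The only non-routine cases are the built-in equality rules of the base calculus --- reflexivity and Leibniz substitution into atoms --- whose images under $(-)^E$ are precisely the axioms of $\E{\Sigma}$, and so are available in the target theory. The backward direction is entirely symmetric: one inducts on a derivation of $\psi_1 \proves^{\theory^E \cup \E{\Sigma}}_{\x} \psi_2$, sends axioms from $\theory^E$ back to axioms of $\theory$ using the identity above, and sends the axioms of $\E{\Sigma}$ to standard instances of reflexivity, symmetry, transitivity, and congruence for equality, all derivable in the base calculus from its built-in equality rules.

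There is essentially no obstacle; the main point requiring care is bookkeeping. Namely, one must ensure that $\E{\Sigma}$ asserts congruence of $E$ for every non-logical symbol actually appearing in $\Sigma$ --- so that each application of Leibniz substitution in a source derivation is matched by an axiom available in the target when translating forward. Once the axioms on both sides are aligned in this way, both inductions proceed mechanically, rule by rule, with no interesting step.
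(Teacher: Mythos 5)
The paper states this proposition without proof (it is marked \qed as routine), and your rule-by-rule induction on derivations is exactly the standard argument it implicitly relies on: translate each step verbatim, match the built-in equality rules against the axioms of $\E{\Sigma}$ in one direction and derive the $\E{\Sigma}$ axioms from the equality rules in the other. Your proposal is correct; the only detail worth making explicit is that the calculus's substitution-of-equals rule applies to arbitrary formulas, so in the forward direction one needs the (routine) lemma that $E$-congruence extends from the atomic instances supplied by $\E{\Sigma}$ to all formulas by induction on formula structure.
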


\begin{proposition}
  The composite forgetful functor $u : \Mod{\E{\Sigma}} \to \Str{\Sigma_E} \to \Str{\Sigma}$ has a left adjoint $e : \Str{\Sigma}\to \Mod{\E{\Sigma}}$, which extends any $\Sigma$-structure to a $\Sigma_E$-structure by interpreting $E$ as equality; and $e$ has a further left adjoint $q : \Mod{\E{\Sigma}}\to \Str{\Sigma}$, which quotients a structure by $E$.
  Moreover, the adjunction $q \adjoint e$ is a reflection: its counit $q(e(A)) \to A$ is a natural isomorphism. 

  Let $\varphi(\x)$ be any formula over $\Sigma$, $A \in \Str{\Sigma}$, and $\a \in A^\x$.
  Then $A \believes \varphi(\a)$ if and only if $e(A) \believes \varphi^E(\a)$.

  Let $\psi(\x)$ be any formula over $\Sigma_E$, $B \in \Str{\Sigma_E}$, and $\b \in B^\x$.
  If $B \believes \psi(\b)$, then $q(B) \believes \backE{\psi([\b])}$.

  Finally, for any theory $\theory$ over $\Sigma$, the adjunction $q \adjoint e$ restricts to an adjunction between the subcategories $\Mod{\theory}$ and $\Mod{\theory^E \cup \E{\Sigma}}$; and $q : \Mod{\E{\Sigma}}\to \Str{\Sigma}$ sends $\theory^E$-conservative maps to $\theory$-conservative maps. \qed
\end{proposition}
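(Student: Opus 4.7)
My plan is to handle the several independent claims in sequence, with the main ingredient being routine universal property checks backed by careful induction on formulas. For the adjoints, I would explicitly construct $e(A)$ as the $\Sigma_E$-structure with the same carrier and $\Sigma$-predicates as $A$, and with $E^{e(A)}$ interpreted as the diagonal $\{(a,a) \mid a \in A\}$; the axioms of $\E{\Sigma}$ hold tautologically, since equality is reflexive, symmetric, transitive and respected by every predicate. The universal property is immediate: any $\Sigma$-morphism $h : A \to u(M)$ with $M \believes \E{\Sigma}$ extends uniquely to a $\Sigma_E$-morphism $e(A) \to M$, the $E$-preservation clause being forced by reflexivity of $E^M$. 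Dually, for $B \in \Mod{\E{\Sigma}}$, take $q(B)$ to be the quotient of $|B|$ by $E^B$ with each $R^{q(B)}$ induced by $R^B$; well-definedness and functoriality use exactly the $\E{\Sigma}$ axioms. A $\Sigma$-morphism $q(B) \to A$ corresponds, via the quotient map, to a $\Sigma$-morphism $B \to A$ sending $E$-related pairs to equal elements, i.e.~to a $\Sigma_E$-morphism $B \to e(A)$. The reflection claim is trivial: $e(A)$ already interprets $E$ as equality, so the quotient is the identity.

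For the formula-preservation claims, both parts go by induction on the formula. In (4), the atomic cases match by construction and connectives pass through because $|e(A)| = |A|$. In (5), atomic $R(\t)$ descends because $R^B$ projects to $R^{q(B)}$; the atomic $E(b,b')$ matches $[b] = [b']$ by definition of the quotient; and $\land$, $\ltrue$, $\fins{\y}$ preserve trivially, the last by choosing witnesses in $B$ and passing to their classes. I would also note — and will need later — a partial converse in the regular fragment: for $M \believes \E{\Sigma}$ and regular $\psi$ over $\Sigma$, $q(M) \believes \psi([\b])$ implies $M \believes \psi^E(\b)$, with the atomic step using the congruence axioms in $\E{\Sigma}$ to replace representatives, and $\fins{\y}$ via inductive lifting of witnesses.

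For the restriction of the adjunction and for conservativity transfer: $A \believes \theory$ iff $e(A) \believes \theory^E$ is immediate from (4), so $e$ restricts as claimed. Conversely, $M \believes \theory^E \cup \E{\Sigma}$ implies $q(M) \believes \theory$ by applying the strengthened converse of (5) to each axiom of $\theory$. For conservativity, suppose $f : B \to B'$ is $\theory^E$-conservative in $\Mod{\E{\Sigma}}$, and that $q(B') \believes \varphi([f(\a)])$ for a regular $\Sigma$-formula $\varphi(\x)$ and $\a \in B^\x$. The converse of (5) gives $B' \believes \varphi^E(f(\a))$; $\theory^E$-conservativity of $f$ then yields a regular $\Sigma_E$-formula $\psi'$ with $B \believes \psi'(\a)$ and $\psi' \proves^{\theory^E}_{\x} \varphi^E$. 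Setting $\psi := \backE{\psi'}$ produces a regular $\Sigma$-formula satisfying $q(B) \believes \psi([\a])$ (by (5) applied to $B$) and $\psi \proves^{\theory}_{\x} \varphi$ (by the backward translation of provability in the previous proposition, using $\backE{\theory^E} = \theory$ and $\backE{\varphi^E} = \varphi$).

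The main obstacle is simply tracking which direction of (5) is available where: the easy $\Rightarrow$ direction of (5) is all that is needed to lift validity of axioms from $M$ to $q(M)$, but the conservativity transfer genuinely requires the $\Leftarrow$ direction, which is precisely the step that uses the $E$-congruence axioms of $\E{\Sigma}$ substantively (not merely as a condition on codomains) and is where regularity of the fragment really enters.
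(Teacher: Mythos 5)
The paper states this proposition with no proof at all (it is marked \verb|\qed| as routine), so there is no argument of the authors' to compare against; your proof is the standard one and is correct, covering the adjunctions, the reflection, both translation claims, the restriction to $\Mod{\theory}$, and the transfer of conservativity along $q$ via the back-translation of provability from the preceding proposition. In particular you rightly isolate the one substantive step — the regular-fragment converse of the preservation claim for $q$, which uses the congruence axioms of $\E{\Sigma}$ — as exactly what the restriction of the adjunction and the conservativity transfer need.
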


\begin{proposition} \label{prop:chase-with-equality}
  Any regular theory $\theory$ (possibly involving equality) over a purely relational signature $\Sigma$ has a chase functor.
\end{proposition}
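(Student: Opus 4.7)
The plan is to reduce to the equality-free case already established in Corollary~\ref{cor:equality-free-chase}, using the elimination-of-equalities apparatus of Definition~\ref{def:elimination-of-equalities} and the surrounding propositions. By passing to its normalisation, we may assume $\theory$ is normal regular. Translating to $\Sigma_E$ and adjoining $\E{\Sigma}$ produces the theory $\theory^E \cup \E{\Sigma}$ over $\Sigma_E$, which is equality-free (the axioms of $\E{\Sigma}$ can be stated using $E$ alone), and which can be re-normalised without leaving that fragment. Since $\Sigma_E$ remains purely relational, Corollary~\ref{cor:equality-free-chase} supplies a chase functor $\chasef{\theory^E \cup \E{\Sigma}} : \Str{\Sigma_E} \to \Mod{\theory^E \cup \E{\Sigma}}$ together with a natural conservative embedding $\eta'$.

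Using the reflective adjunction $q \adjoint e$, I would then define
\[ \chasef{\theory} := q \circ \chasef{\theory^E \cup \E{\Sigma}} \circ e : \Str{\Sigma} \to \Mod{\theory}, \]
which is well-defined since the adjunction restricts to the relevant model subcategories. The natural embedding $\eta_A : A \to \chasef{\theory}(A)$ is the composite of the reflective counit iso $A \iso q(e(A))$ with the image $q(\eta'_{e(A)})$. Functoriality and naturality are immediate from those of $e$, $q$, the counit, and $\eta'$.

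The remaining and most delicate step is to verify $\theory$-conservativity of $\eta_A$. For this we invoke the statement from the preceding proposition that $q$ sends $\theory^E$-conservative maps to $\theory$-conservative ones. The main obstacle is that $\eta'_{e(A)}$ is given a priori only as $(\theory^E \cup \E{\Sigma})$-conservative; one expects this to coincide with $\theory^E$-conservativity for maps between models of $\E{\Sigma}$, since any witness formula $\psi$ obtained via $\proves^{\theory^E \cup \E{\Sigma}}$ may absorb the $\E{\Sigma}$-axioms (whose content is already satisfied by $e(A)$), but extracting a clean statement and proof of this equivalence is the part of the argument that requires real care.
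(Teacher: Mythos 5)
Your proof is correct and takes essentially the same route as the paper's: apply Corollary~\ref{cor:equality-free-chase} to $\theory^E \cup \E{\Sigma}$ (which is equality-free and still over a relational signature) and conjugate by the reflection $q \dashv e$, composing with the isomorphism $A \iso q(e(A))$. The conservativity point you flag as delicate is already discharged by the stated back-translation property --- $\psi_1 \proves^{\theory^E \cup \E{\Sigma}}_\x \psi_2$ implies $\backE{\psi_1} \proves^{\theory}_\x \backE{\psi_2}$, so $\backE{(-)}$ absorbs the $\E{\Sigma}$-axioms --- which is exactly what the preceding proposition's claim that $q$ turns conservative maps into $\theory$-conservative maps is packaging.
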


\begin{proof}
  By \zcref{cor:equality-free-chase}, $\theory^E \cup \E{\Sigma}$ has a chase functor, which we write as $\chasef{E}$, $\eta^E$.
  
  Consider these as restricted to the subcategory $\Mod{\E{\Sigma}}$.
  Then take $\chasef{\theory}$ to be the composite $q\cdot \chasef{E} \cdot e : \Str{\Sigma} \to \Mod{\E{\Sigma}} \to \Mod{\theory^E \cup \E{\Sigma}} \to \Mod{\theory}$; and take $\eta^{\theory}_A$ to be the composite of $q(\eta^{E}_{e(A)}) : q(e(A)) \to q(\chase[\theory^E]{e(A)})$ with the natural isomorphism $A \iso q(e(A))$.
\end{proof}
(Note that when $\theory$ is already equality-free, \chasef{\theory} provided by this lemma is naturally isomorphic to the previous version.)

\begin{proposition}[Elimination of function symbols; cf.\ Bell, Machover~{\cite[\textsection 2.10]{bell-machover:1977}}] \label{prop:elimination-of-functions}
  Let $\Sigma$ be any signature.
  Let $\bar{\Sigma}$ be the signature obtained from $\Sigma$ by replacing each $n$-ary function symbol by an $(n+1)$-ary predicate symbol, and let $\F{\Sigma}$ the regular theory over $\bar{\Sigma}$ asserting that the new predicate symbols are functional.
  Then there is a translation $\bar{\cdot} : \Form[\Sigma]{-} \to \Form[\bar{\Sigma}]{-}$, preserving the regular fragment of logic, and conservative modulo $\F{\Sigma}$, i.e.\ such that $\varphi_1, \ldots, \varphi_n \proves_\x \psi $ if and only if $\bar{\varphi_1}, \ldots, \bar{\varphi_n} \proves^{\F{\Sigma}}_\x \bar{\psi} $, for all suitable $\x, \varphi_i, \psi$. 
  Moreover, this translation induces an equivalence of categories $\Str{\Sigma} \equiv \Mod{\F{\Sigma}}$. \qed
\end{proposition}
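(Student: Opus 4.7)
I would proceed in the standard way, defining the translation in two stages. For each term $t$ over $\Sigma$ with variables in $\x$, first define a regular formula $\tilde t(\x,y)$ over $\bar\Sigma$, with distinguished output variable $y$, by recursion on $t$: for $t = x_i$, set $\tilde t(\x,y) := (y = x_i)$; for $t = f(t_1,\ldots,t_n)$, set $\tilde t(\x,y) := \fins{z_1,\ldots,z_n}(\tilde{t_1}(\x,z_1) \land \cdots \land \tilde{t_n}(\x,z_n) \land R_f(z_1,\ldots,z_n,y))$, where $R_f$ is the predicate symbol replacing $f$. For each atomic $R(t_1,\ldots,t_n)$, take $\overline{R(t_1,\ldots,t_n)} := \fins{z_1,\ldots,z_n}(\tilde{t_1}(\x,z_1) \land \cdots \land \tilde{t_n}(\x,z_n) \land R(z_1,\ldots,z_n))$, similarly for equalities, and then propagate through the remaining connectives and quantifiers by commutation ($\overline{\varphi\land\psi}=\bar\varphi\land\bar\psi$, $\overline{\fins{y}\varphi}=\fins{y}\bar\varphi$, and so on). By inspection, the Horn, regular, geometric, and full first-order fragments are each preserved, as only $\exists$ and $\land$ are introduced at the atomic level.

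The key auxiliary lemma for conservativity is that, modulo $\F{\Sigma}$, the formula $\tilde t(\x,y)$ provably denotes the unique value of $t$: by induction on $t$, one checks that $\top \proves^{\F{\Sigma}}_{\x} \fins{y}\,\tilde t(\x,y)$ and $\tilde t(\x,y) \land \tilde t(\x,y') \proves^{\F{\Sigma}}_{\x,y,y'} y = y'$. This yields a substitution lemma: $\overline{\varphi[\t/\x]}$ is $\F{\Sigma}$-provably equivalent to $\fins{\y}(\bigwedge_i \tilde{t_i}(-,y_i) \land \bar\varphi(\y))$. With this in hand, both directions of the conservativity biconditional go through by induction on derivations. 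In the forward direction, each application of a function symbol in a $\Sigma$-derivation is replaced by its graph together with an appeal to totality from $\F{\Sigma}$; in the reverse direction, one uses the substitution lemma and uniqueness to re-absorb existentially introduced variables into function applications.

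For the equivalence of categories, I would define functors in both directions acting as the identity on underlying sets and maps. Given $A \in \Str{\Sigma}$, let $e(A) \in \Mod{\F{\Sigma}}$ interpret each $R_f$ as the graph of $f^A$; then $e(A) \believes \F{\Sigma}$ trivially. Conversely, for $B \in \Mod{\F{\Sigma}}$, the totality-and-uniqueness axioms give, by unique choice in IZF, a function $f^B : |B|^{\arity{f}} \to |B|$ for each $f \in \Sigma_\Fun$, defining $r(B) \in \Str{\Sigma}$. A map of structures preserves a function iff it preserves its graph, so both $e$ and $r$ act as the identity on morphisms, and are mutually inverse on the nose, giving the claimed equivalence (in fact, an isomorphism) of categories.

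The main obstacle is keeping the substitution bookkeeping aligned in the conservativity proof. Once the functionality lemma for $\tilde t$ is established and the substitution lemma extracted, the rest is a routine case analysis on sequent-calculus rules; but the interaction of the added existentials in the translation with the substitution and quantifier rules is the only place where real attention is needed. A secondary point worth flagging is that the extraction of a function from a functional predicate in constructing $r$ relies on unique choice, which is available in IZF; this is the same input that makes elimination of equalities behave well in Definition~\ref{def:elimination-of-equalities}.
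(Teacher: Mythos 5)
The paper gives no proof of this proposition---it is stated as a standard fact with the proof omitted, deferring to Bell--Machover \textsection~2.10---and your proposal is precisely the standard argument that citation refers to: graph-translation of terms, the totality/uniqueness lemma for $\tilde t$ modulo $\F{\Sigma}$, the induced substitution lemma and induction on derivations, and unique choice (available in IZF) to recover functions from functional relations for the categorical equivalence. One small correction: the Horn fragment is \emph{not} preserved by this translation, since the translation of an atomic formula already introduces existential quantifiers; but the proposition only claims preservation of the regular fragment, so nothing in your argument is affected.
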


\begin{theorem}\label{theorem: main chase theorem}
  Let $\Sigma$ be an arbitrary signature, and \theory\ a regular theory over $\Sigma$.
  Then there exists a chase functor $\chasef{\theory} : \Str{\Sigma} \to \Mod{\theory}$ for \theory.
\end{theorem}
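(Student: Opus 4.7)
The plan is to reduce to the purely relational case of Proposition~\ref{prop:chase-with-equality}, by applying elimination of function symbols.

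First, I would use Proposition~\ref{prop:elimination-of-functions} to pass from $\Sigma$ to the purely relational signature $\bar{\Sigma}$, the functionality theory $\F{\Sigma}$, and the translated regular theory $\bar{\theory}$ over $\bar{\Sigma}$. The union $\bar{\theory} \cup \F{\Sigma}$ is then a regular theory over a purely relational signature, so by Proposition~\ref{prop:chase-with-equality} it admits a chase functor $\chasef{\bar{\theory} \cup \F{\Sigma}} : \Str{\bar{\Sigma}} \to \Mod{\bar{\theory} \cup \F{\Sigma}}$ with some $(\bar{\theory} \cup \F{\Sigma})$-conservative natural embedding $\bar{\eta}$. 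Since the equivalence $\Str{\Sigma} \equiv \Mod{\F{\Sigma}}$ restricts to $\Mod{\theory} \equiv \Mod{\bar{\theory} \cup \F{\Sigma}}$, I would define $\chasef{\theory}$ as the evident composite
\[
  \Str{\Sigma} \equiv \Mod{\F{\Sigma}} \into \Str{\bar{\Sigma}} \to \Mod{\bar{\theory} \cup \F{\Sigma}} \equiv \Mod{\theory},
\]
where the middle arrow is $\chasef{\bar{\theory} \cup \F{\Sigma}}$, and $\eta^{\theory}_A$ is induced by $\bar{\eta}$ transported back across the left equivalence.

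The remaining task is to verify $\theory$-conservativity of $\eta^{\theory}$. Given a regular $\varphi(\x)$ over $\Sigma$ and $\a \in A^\x$ with $\chasef{\theory}(A) \believes \varphi(\eta^{\theory}_A(\a))$, the semantics clause of Proposition~\ref{prop:elimination-of-functions} rephrases this as a satisfaction of $\bar{\varphi}$ at $\bar{\eta}(\bar{\a})$ in the relational chase of $\bar{A}$; then $(\bar{\theory} \cup \F{\Sigma})$-conservativity of $\bar{\eta}$ supplies a regular formula $\psi'$ over $\bar{\Sigma}$ with $\bar{A} \believes \psi'(\bar{\a})$ and $\psi' \proves^{\bar{\theory} \cup \F{\Sigma}} \bar{\varphi}$. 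Modulo $\F{\Sigma}$, each new atom $P_f(\x,y)$ of $\bar{\Sigma}$ is equivalent to $y = f(\x)$, so $\psi'$ is $\F{\Sigma}$-equivalent to the translation $\bar{\psi}$ of some regular formula $\psi$ over $\Sigma$; by the provability-conservativity of $\bar{\cdot}$, this $\psi$ is the required witness.

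The main obstacle is essentially just this last lifting step---knowing that the witness $\psi'$ coming out of the relational-side chase can be taken to be the translation of a regular formula over the original signature $\Sigma$. This is routine given the explicit form of the translation in Proposition~\ref{prop:elimination-of-functions}, but it is the only place where the two reduction tools interact rather than being merely stacked.
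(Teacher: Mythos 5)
Your proposal is correct and follows essentially the same route as the paper: eliminate function symbols via Proposition~\ref{prop:elimination-of-functions}, apply Proposition~\ref{prop:chase-with-equality} to the translated theory together with $\F{\Sigma}$ over the relational signature $\bar{\Sigma}$, and transport the resulting chase functor back across the equivalence $\Str{\Sigma} \equiv \Mod{\F{\Sigma}}$ restricted to $\Mod{\theory} \equiv \Mod{\bar{\theory}}$. The only differences are cosmetic --- the paper additionally normalises the combined theory and leaves the transfer of $\theory$-conservativity (your ``lifting step'' for the witness formula, which is indeed routine via the backward translation $P_f(\y,z) \mapsto f(\y)=z$) implicit in the phrase ``transfers along these equivalences''.
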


\begin{proof}
  Let $\bar{\Sigma}$ and the translation $\bar{\cdot}$ be as in \zcref{prop:elimination-of-functions}.
  Take $\bar{\theory}$ to be the normalisation of the regular theory
  \[ \F{\Sigma} \cup \{ \bar\varphi \proves_\x \bar\psi\ |\ (\varphi \proves_\x \psi) \in \theory \}. \]

  The equivalence $\Str{\Sigma} \equiv \Mod{\F{\Sigma}}$ restricts to an equivalence $\Mod{\theory} \equiv \Mod{\bar{\theory}}$,
  so the chase functor $\chasef{\bar{\theory}}$ provided by \zcref{prop:chase-with-equality} restricts to $\Mod{\F{\Sigma}}$ and transfers along these equivalences, yielding a chase functor for $\theory$.
\end{proof}

  (Note that in case $\Sigma$ was already purely relational and $\theory$ normal, we once again have not changed $\chasef{\theory}$, since in this case $\bar{\Sigma} = \Sigma$, $\bar{\theory} = \theory$, and the translations and equivalences involved are identities.)

We end this section by drawing, as corollaries, the regular and geometric completeness of regular theories.

\begin{corollary}\label{corollary: completeness}
  Let \theory\ be a regular theory over a signature $\Sigma$, and $\sigma$ a regular sequent valid in all models of $\theory$.
  Then $\theory \proves \sigma$.
\end{corollary}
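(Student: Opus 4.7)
The plan is to leverage the chase functor from Theorem~\ref{theorem: main chase theorem} to extract a syntactic derivation of $\sigma$ from its semantic validity. First I would normalise $\sigma$, so that without loss of generality it has the form $\varphi(\x) \proves_\x \fins{\y} \psi(\x,\y)$ with $\varphi$, $\psi$ Horn; this is harmless since normalisation preserves both derivability and validity in Tarski models, as recalled in the Preliminaries.

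Next, I would form the generic example: take $A := \syntstr{\x}{\varphi}$, and let $\eta_A : A \to \chase[\theory]{A}$ be the $\theory$-conservative embedding into a model of $\theory$ supplied by the chase functor. Writing $[\x]$ for the canonical tuple of generators of $A$, we have $A \believes \varphi([\x])$ by construction; since $\eta_A$ is a homomorphism and Horn formulas are preserved by homomorphisms, $\chase[\theory]{A} \believes \varphi(\eta_A([\x]))$. As $\sigma$ is valid in every model of $\theory$, it follows that $\chase[\theory]{A} \believes \fins{\y} \psi(\eta_A([\x]),\y)$.

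The key step is to apply $\theory$-conservativity of $\eta_A$ to the regular formula $\fins{\y}\psi(\x,\y)$: this supplies a regular formula $\chi(\x)$ over $\Sigma$ with $A \believes \chi([\x])$ and $\chi(\x) \proves^{\theory}_\x \fins{\y}\psi(\x,\y)$. The representing property of $\syntstr{\x}{\varphi}$ recorded in the Preliminaries then converts $A \believes \chi([\x])$ into the pure regular derivation $\varphi(\x) \proves^{\emptyset}_\x \chi(\x)$, hence $\varphi \proves^{\theory}_\x \chi$; cutting with $\chi \proves^{\theory}_\x \fins{\y}\psi$ gives $\theory \proves \sigma$.

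No step here is genuinely difficult: all the real work has been done inside Theorem~\ref{theorem: main chase theorem}. The conceptual point of the corollary is precisely that $\theory$-conservativity — which is all the chase provides constructively — is exactly the amount of information needed to pass from semantic validity to a syntactic proof. Classically one would instead invoke the weak reflection of $\Mod{\theory}$ in $\Str{\Sigma}$ to push the witnessing tuple $\b \in \chase[\theory]{A}^{\y}$ back into $A$, but the extra strength of reflection over conservativity is not needed for this direction of completeness.
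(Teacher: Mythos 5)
Your proposal is essentially the paper's own proof: form the representing structure $\syntstr{\x}{\varphi}$, push its generic tuple along the $\theory$-conservative map $\eta$ into $\chase[\theory]{\syntstr{\x}{\varphi}}$, use validity of $\sigma$ there, and pull the witnessing regular formula back to a derivation via the characterisation of validity in $\syntstr{\x}{\varphi}$. The only difference is that the paper first reduces to a purely relational signature via Proposition~\ref{prop:elimination-of-functions} (so that the explicit description of $\syntstr{\x}{\varphi}$ and its validity characterisation, as recorded in the Preliminaries, apply directly), and does not bother normalising the consequent to the form $\fins{\y}\psi$ with $\psi$ Horn; you should include the former reduction, but otherwise the argument is the same.
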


\begin{proof}
  We assume without loss of generality that $\Sigma$ is relational (by \zcref{prop:elimination-of-functions}), and that $\sigma$ is of the form $\varphi(\x) \proves_{\x} \psi(\x)$, with $\varphi$ Horn.
  
  Write $S$ for the representing structure $\syntstr{\x}{\varphi}$, and $[\x]$ for its canonical tuple $[x_0],\ldots,[x_{n-1}] \in S^\x$.
  Now consider $\eta_S : S \to \chase[\theory]{S}$.
  Certainly $S \believes \varphi([\x])$, so since $\varphi$ is Horn, $\chase[\theory]{S} \believes \varphi(\eta_S([\x]))$.
  By hypothesis, $\chase[\theory]{S} \believes \sigma$, so $\chase[\theory]{S} \believes \psi(\eta_S([\x]))$.
  So by $\theory$-conservativity of $\eta_S$, there is some regular $\theta(\x)$ such that $S \believes \theta([\x])$ and $\theta(\x) \proves_\x^\theory \psi(\x)$.

  But by the characterisation of validity in $S$, this implies that $\varphi(\x) \proves^{\emptyset}_\x \theta([\x])$, and hence $\varphi(\x) \proves_\x^\theory \psi(\x)$.
\end{proof}

In fact, in this proof, the consequent of $\sigma$ could have been an arbitrary disjunction $\bigvee_{i \in I} {\psi_i}$ of regular formulas: $ \chase[\theory]{S} \believes \bigvee_{i \in I} \psi_i(\eta_S([\x]))$ means that there exists some $i\in I$ such that $\chase[\theory]{S}\believes \psi_i(\eta_S([\x]))$, whence we proceed as before.
We thus obtain a both constructive and  (Tarski-)semantical proof of the disjunction property (cf.\ Johnstone~\cite[D3.3.11]{elephant1}) and geometric completeness for regular theories:

\begin{scholium}\label{scholium: disjunction property}
  Let \theory\ be a regular theory and $\sigma$ a sequent of the form $\varphi\proves_{\x}\bigvee_I {\psi_i}$, where $\varphi$ and $\psi_i$ are regular formulas.
  If $\theory\proves \sigma$, then there is some $i\in I$ such that $\varphi\proves^{\theory}_{\x} {\psi_i}$.
  \qed
\end{scholium}

\begin{scholium}
  Let \theory\ be a regular theory over a signature $\Sigma$.
  Then any geometric sequent valid in all models of \theory\ is provable from \theory.
\end{scholium}

\begin{proof}
  For sequents of the form $\varphi\proves_{\x}\bigvee_I \fins{\y_i}{\psi_i}$, with $\varphi$ and all $\psi_i$ Horn, this is a minor adjustment of the proof of \zcref{corollary: completeness}, as sketched above.

  By Johnstone~\cite[D1.3.8]{elephant1}, any geometric sequent is provably equivalent to one of the form $\bigvee_J \fins{\z_j}{\varphi_j}\proves_{\x}\bigvee_I \fins{\y_i}{\psi_i}$, with all $\varphi_j$, $\psi_i$ Horn.
  But validity/provability of $\sigma$ is  equivalent to the validity/provability of all the sequents ${\varphi_j}\proves_{\x,\z_j}\bigvee_I \fins{\y_i}{\psi_i}$; so this reduces to the preceding case.
\end{proof}

\section{Reflectivity and choice principles}\label{section: reflectivity and choice principles}
 
In a classical meta-theory, the construction of $\chase[\theory]{A}$ shows that the category of models for a regular theory is weakly reflective in the category of structures (Ad\'a{}mek et al~\cite[II.10]{adamek-herrlich-rosicky-tholen}).
That is to say, that any homomorphism from a structure $A$ to a \theory-model $M$ factors (not necessarily uniquely) through  $\eta_A:A\to\chase[\theory]{A}$. 
\[\bfig
\ptriangle/>`->`<--/<600,350>[A`M`{\chase[\theory]{A}};`\eta_A`h]
\efig\]

The classical proof of this uses the axiom of choice; it turns out that this is unavoidable.
We show in this section that various reflectivity properties for regular theories are in fact equivalent to choice principles in the ambient set theory.

\subsection*{Background}

We start by recalling some further background in category theory and constructive choice principles.

\begin{definition}
  Let $\C$ be a category, with an initial object $0$ and terminal object $1$.

  \begin{enumerate}
  \item For maps $m$, $p$, say $m$ is \emph{weakly (left) orthogonal} to $p$ if for every commutative square from $m$ to $p$, there exists some diagonal filler.
    
\[\bfig
\square<350,350>[A`C`B`D; `m`p`]
\morphism(0,0)/-->/<350,350>[B`C;]
\efig\]
  
  \item A family of maps with common domain, $m_i : A \to B_i$ ($i \in I$), is \emph{jointly weakly (left) orthogonal} to $p : C \to D$ if for every $h : A \to C$ and family $k_i : B_i \to D$ such that $k_i m_i = ph$, there is some $i \in I$ for which there is a diagonal filler $e : B_i \to C$.

  \item A map (resp.\ family) is \emph{weakly} (resp.\ {jointly weakly}) \emph{orthogonal} to an object $C$ if it is (jointly) weakly orthogonal to the unique map $!_C : C \to 1$.%
    \footnote{Viewed from the right-hand side, these are exactly injectivity of an object with respect to a map or cone, in the sense of Ad\'a{}mek, Rosick\'{y}~\cite[4.1, 4.14]{adamekandrosicky:94}.}

  \item A family of objects $A_i$ is \emph{jointly weakly initial} if for every object $X$, for some $i$ there is a map $A_i \to X$.
    (Equivalently, if the maps $0 \to A_i$ are jointly weakly orthogonal to all objects.)

   \item  An object $X$ is \emph{projective} if the map $0 \to X$ is weakly orthogonal to all epimorphisms.
  \end{enumerate}
\end{definition}

We recall various choice principles, and the relationships between them.

\begin{definition} \needspace{3\baselineskip}  \leavevmode 
  \begin{enumerate}
  \item The \emph{axiom of choice} (\AC) states just that all sets are projective.

  \item The \emph{presentation axiom} (\PAx) (cf.~Aczel~\cite{aczel:77}, Rathjen~\cite[\textsection 4]{Rathjen02choiceprinciples}), also known as \coshep\ (“category of sets has enough projectives”), states that for every set $X$, there is some cover (i.e.\ surjection) $s : Y \surjto X$, with $Y$ projective.

  \item The axiom \WIC\ (\emph{“weakly initial cover”}) states that for every set $X$ there is some cover $s : Y \surjto X$, weakly initial in the category of covers of $X$. 

  \item The axiom \WISC\ (\emph{“weakly initial set of covers”}, studied by Van den Berg and Moerdijk in \cite{vandenBerg2014-VANTAO-18}, there called the (weak) axiom of multiple choice) states that for every set $X$, there is some family of covers $(s_i : Y_i \surjto X)_{i \in I}$, jointly weakly initial in the category of covers of $X$.
  \end{enumerate}
\end{definition}

(To our knowledge, \WIC\ has not been previously considered in the literature.)

\begin{definition} \needspace{3\baselineskip} \leavevmode
  \begin{enumerate}
  \item \emph{Dependent choice} (DC) is the statement that for any set $A$ and entire set-relation $R \subseteq A \times A$, for any $x_0\in A$ there is some function $f : \N \to A$ such that $f(0) = x_0$ and for all $n \in \N$, $(x_n,x_{n+1}) \in R$.   

  \item \emph{Relativised dependent choice} (RDC), which might also be called \emph{class dependent choice}, is the scheme asserting for all formulas $\alpha(x)$, $\rho(x,y)$ (possibly with further free variables) that if $\rho$ is an entire relation on $\alpha$, then for any $x_0$ such that $\alpha(x_0)$ there is some function $f$ on $\N$ such that $f(0) = x_0$ and for all $n \in \N$, $\alpha(x_n)$ and $\rho(x_n,x_{n+1})$. 
  
  \item The \emph{relation reflection scheme} (RRS) (Aczel \cite{aczelMALQ08}) is the scheme asserting for each pair of formulas $\alpha(x)$, $\rho(x,y)$ (possibly with further free variables) that if $\rho$ is an entire relation on $\alpha$, then for any set $A$ that is a subclass of $\alpha$, there is some set $B$, also a subclass of $\alpha$, such that $A\subseteq B$ and the restriction of $\rho$ to $B$ is still entire.
  \end{enumerate}
\end{definition}

We recall known relationships between these, and add $\WIC$ to the mix.

\begin{proposition} \label{prop:choice-principle-implications}
 \needspace{3\baselineskip}  \leavevmode 
 \begin{enumerate}
 \item $\AC \Imp \PAx \Imp \WIC \Imp \WISC$. 
 \item $\RDC \Equiv \DC + \RRS$.
 \item $\WIC \Imp \DC$.  
 \end{enumerate}
  
\end{proposition}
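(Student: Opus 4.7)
For (1), the first two implications follow standard patterns. The plan for $\AC \Imp \PAx$ is to observe that under AC every set is projective, so $\operatorname{id}_X$ itself serves as a projective cover of any $X$. For $\PAx \Imp \WIC$, given a projective cover $s : Y \surjto X$, any other cover $t : Z \surjto X$ lifts $s$ through $t$ by projectivity of $Y$, exhibiting $s$ as weakly initial in the category of covers of $X$. For $\WIC \Imp \WISC$, the singleton family consisting of the weakly initial cover provided by WIC is jointly weakly initial among covers of $X$.

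For (2), $\RDC \Imp \DC$ is immediate by taking $\alpha(x) := x \in A$. For $\RDC \Imp \RRS$, my plan is to apply RDC to the class of subsets of $\{x : \alpha(x)\}$, with relation ``$X \subseteq Y$, every element of $Y$ satisfies $\alpha$, and every element of $X$ has a $\rho$-witness in $Y$''. This relation is entire by Strong Collection (available in IZF), and RDC then yields a chain $A = B_0 \subseteq B_1 \subseteq \cdots$ whose union $B$ is the required set. Conversely, for $\DC + \RRS \Imp \RDC$, I would first apply RRS to $A := \{x_0\}$ to obtain a set $B$ on which $\rho$ is entire, then apply plain DC on $B$ starting at $x_0$.

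For (3), the heart of the matter, the plan is to convert weak initiality of a cover of $A$ into an endomap of $Y$ witnessing $R$ modulo the cover. Given entire $R \subseteq A \times A$ and seed $x_0 \in A$, I would apply WIC to $A$ to obtain a weakly initial cover $s : Y \surjto A$, and pick $y_0 \in Y$ with $s(y_0) = x_0$. Form the set $\widetilde{R} = \{(y,y') \in Y \times Y : (s(y), s(y')) \in R\}$ with first projection $\pi_1 : \widetilde{R} \to Y$. Since $R$ is entire and $s$ surjective, $s \circ \pi_1 : \widetilde{R} \surjto A$ is itself a cover of $A$, so weak initiality of $s$ produces a map $m : Y \to \widetilde{R}$ with $s \circ \pi_1 \circ m = s$. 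Taking $f$ to be the second component of $m$ yields $f : Y \to Y$ satisfying $(s(y), s(f(y))) \in R$ for all $y$; iterating $f$ from $y_0$ and applying $s$ delivers the desired $R$-chain.

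The main obstacle is this last argument. The standard $\PAx \Imp \DC$ proof exploits a genuine section of the induced cover $\widetilde{R} \surjto Y$ provided by projectivity of $Y$, something unavailable from WIC alone; weak initiality only gives a lift along a cover of $A$, not a section of a cover of $Y$. The trick is to reorient the construction so that the cover being lifted against is one of $A$, engineered so that its second coordinate automatically lives in $Y$ and hence supplies the needed endomap.
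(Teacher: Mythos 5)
Your proposal is correct, but it takes a more self-contained route than the paper, which proves this proposition almost entirely by citation: $\AC \Imp \PAx \Imp \WISC$ is quoted from Rathjen, $\RDC \Equiv \DC + \RRS$ from Aczel, and $\WIC \Imp \DC$ is handled by the one-line observation that Blass's proof of $\PAx \Imp \DC$ ``requires only weak initiality of the cover.'' The only part the paper argues explicitly is the interpolation of $\WIC$, where its two remarks (a cover by a projective is weakly initial; a singleton of a weakly initial cover is a jointly weakly initial set) coincide exactly with your arguments for $\PAx \Imp \WIC$ and $\WIC \Imp \WISC$. Your treatment of item (2) is a correct reconstruction of the standard arguments (the forward direction via Strong Collection and a union of an $\RDC$-chain of subsets, the converse via $\RRS$ on $\{x_0\}$ followed by plain $\DC$), which the paper does not spell out. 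Your item (3) is the genuinely valuable part: you correctly identify that weak initiality does not give a section of the cover $\pi_1 : \widetilde{R} \surjto Y$, and your fix --- lifting $s$ against the cover $s \circ \pi_1 : \widetilde{R} \surjto A$ and extracting the second component to get $f : Y \to Y$ with $(s(y), s(f(y))) \in R$, then iterating $f$ by primitive recursion --- is exactly the adaptation of Blass's argument that the paper gestures at without writing down. In short: everything you prove is right, and you have supplied the details the paper delegates to the literature.
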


\begin{proof}
  $\AC \Imp \PAx \Imp \WISC$ is noted by Rathjen, \cite[\textsection \textsection 5,~6]{Rathjen02choiceprinciples}.
  The interpolation of $\WIC$ is straightforward: any cover by a projective set is weakly initial, and the singleton of any weakly initial cover is a jointly weakly initial set.
  
   $\RDC \Equiv \DC + \RRS$ is shown by Aczel, \cite[Thm.~2.4]{aczelMALQ08}.

   For $\WIC \Imp \DC$, note that the argument for $\PAx \Imp \DC$ in Blass \cite[Thm.~6.2]{blass:79} requires only weak initiality of the cover.
\end{proof}

\begin{proposition}
  (Meta-theorem.) Even under $\ZF + \REA$, \WISC\ does not imply \WIC.
\end{proposition}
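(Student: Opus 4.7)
The plan is to exhibit (meta-theoretically) a model $N$ of $\ZF + \REA + \WISC$ in which $\WIC$ fails. Since $\AC$ trivialises all the principles in question, $N$ cannot satisfy $\AC$; the natural source of such models is a symmetric submodel of a forcing extension of a model of $\ZF + \AC + \REA$ (which exists assuming, e.g., an inaccessible cardinal).

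First I would fix a ground model $V \vDash \ZF + \AC + \REA$ and pick a set $X \in V$ whose covers are to exhibit the failure of $\WIC$. I would then force with a poset $\mathbb{P}$ adding a $\kappa$-indexed family of generic covers $(s_\alpha : Y_\alpha \surjto X)_{\alpha < \kappa}$ for some suitably large regular $\kappa$, and pass to the symmetric submodel $N$ of $V[G]$ determined by the natural action of $\mathrm{Sym}(\kappa)$ on the indices together with the filter of subgroups fixing bounded subsets of $\kappa$. The design intent is that every element of $N$ admits a bounded-support name, while the whole family $(s_\alpha)$ enters $N$ with trivial support, and so features as a set in $N$.

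I would then verify three properties of $N$. That $N \vDash \ZF$ is the standard symmetric-model theorem; $N \vDash \REA$ requires a preservation argument showing that regular sets in $V$ retain their regularity in $N$, exploiting the boundedness of supports. For $\WISC$ at $X$, the family $(s_\alpha)_{\alpha < \kappa}$ is jointly weakly initial: any cover $Z \surjto X$ in $N$ has bounded support $S$, and for any $\alpha \notin S$ the genericity of $s_\alpha$ forces the existence of a lift $Y_\alpha \to Z$ over $X$ in $N$; $\WISC$ at other sets follows from $\REA$. For the failure of $\WIC$ at $X$: any purported weakly initial cover $Y \surjto X$ in $N$ has bounded support $S$, and for $\alpha \notin S$ a symmetry fixing $S$ but moving $\alpha$ rules out any equivariant lift $Y \to Y_\alpha$ over $X$, contradicting weak initiality.

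The main obstacle is the simultaneous calibration of the forcing and symmetry filter so that all three properties hold. Enough symmetry is needed to prevent any single cover from dominating the family (so $\WIC$ fails), yet the generic covers must remain ``projective enough'' in $N$ to admit lifts from them to arbitrary covers (so $\WISC$ holds); preservation of $\REA$ imposes a further delicate constraint, as $\REA$ is not automatically stable under symmetric forcing. The bulk of the technical work in an explicit construction would lie in verifying these compatibilities — in particular, pinning down the exact $X$, $\mathbb{P}$, and symmetry filter that make the bounded-support argument go through on all three fronts at once.
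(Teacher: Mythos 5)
Your proposal is a programme rather than a proof: you explicitly defer the choice of $X$, the forcing poset $\mathbb{P}$, and the symmetry filter --- that is, the entire mathematical content --- to ``the bulk of the technical work''. The two points you postpone are exactly the ones that are genuinely hard and not known to follow from a routine bounded-support argument. First, preservation of $\REA$ in a symmetric submodel is, as you yourself note, not automatic, and you give no mechanism for it. Second, the verification of $\WISC$ is where the real tension lies: the same symmetries that are supposed to destroy any single weakly initial cover threaten equally to destroy joint weak initiality of the generic family, so your assertion that ``the genericity of $s_\alpha$ forces the existence of a lift $Y_\alpha \to Z$ over $X$ in $N$'' is precisely the claim that needs proof, not a consequence of genericity. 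Your further claim that $\WISC$ at sets other than $X$ ``follows from $\REA$'' is not a known implication and is left entirely unjustified. As it stands, the argument could not be completed without essentially redoing a substantial body of work on symmetric models for weak choice principles.

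The paper sidesteps all of this by outsourcing the model construction to the literature: Rathjen (Cor.~6.11 of the cited paper on choice principles) exhibits a model of $\ZF + \AMC + \REA$ in which $\AC_\omega$ fails. Since $\AMC \Imp \WISC$, while $\WIC \Imp \DC \Imp \AC_\omega$ (the first implication being established earlier in the paper), that same model satisfies $\ZF + \REA + \WISC$ but refutes $\WIC$. The moral is that one does not need a model in which $\WIC$ fails ``for the right reason'' (absence of a weakly initial cover dominating a symmetric family); it suffices to find any $\REA$- and $\WISC$-preserving model in which some consequence of $\WIC$, such as $\AC_\omega$, fails. If you wish to repair your argument, the cheapest route is to prove $\WIC \Imp \DC$ (or $\Imp \AC_\omega$) and then cite an existing separation, rather than building a bespoke symmetric model.
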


\begin{proof}
  Rathjen \cite[Cor.~6.11]{Rathjen02choiceprinciples}  shows $\ZF + \AMC + \REA \not\Imp \AC_\omega$.
  But by \zcref{prop:choice-principle-implications}, $\AMC \Imp \WISC$, and $\WIC \Imp \DC \Imp \AC_\omega$, under $\CZF$.
  So $\ZF + \REA + \WISC \not\Imp \WIC$.
\end{proof}

In fact, \WIC\ suffices for many applications of \PAx\ in the literature.
It seems likely to us that \WIC\ is strictly weaker than \PAx, but this does not seem to be obvious from existing results.

We will use DC reformulated in terms of graphs (in the category-theorist’s sense).

\begin{definition}
  A \emph{graph} $G$ consists a set $G_0$ and a function $G_1 : (G_0 \times G_0) \to \Set$.
  A \emph{class graph} (meta-definition) is a formula $\gamma_0(x)$ together with a formula $\gamma_1(x,y,f)$, possibly with further free variables (“parameters”).
  Any graph $G = (G_0,G_1)$ may be considered as a class graph, given by the formulas $x \in G_0$, $f \in G_1(x,y)$.

  A class graph $(\gamma_0(x),\gamma_1(x,y,f))$ is \emph{entire} if for each $x$ such that $\gamma_0(x)$ holds, there exist some $y$, $f$ such that $\gamma_0(y)$ and $\gamma_1(x,y,f)$.

  A \emph{branch}%
  \footnote{\emph{Path} and \emph{trail} are more established for this; but they are also often used just for \emph{finite} paths, so to avoid ambiguity we borrow \emph{branch} from the special case of trees.}
in a class graph $(\gamma_0(x),\gamma_1(x,y,f))$ is a function $b$ from $\N$ to pairs $(x_n,f_n)$ such that for each $n$, both $\gamma_0(x_n)$ and $\gamma_1(x_n,x_{n+1},f_n)$ holds.
  For a (set) graph $G$, write $\Br{G}{x}$ for the set of branches starting from $x \in G_0$.
\end{definition}

\begin{proposition}
  (Over CZF.)
  DC is equivalent to the statement: for every entire graph $(G_0,G_1)$ and $x_0 \in G_0$, there is some branch in $(G_0,G_1)$ starting from $x_0$.
  RDC is equivalent to the scheme asserting for each class graph $(\gamma_0(x),\gamma_1(x,y,f))$ that for all values of the parameters, if $(\gamma_0(x),\gamma_1(x,y,f))$ is entire, then for all $x_0$ such that $\gamma_0(x_0)$ holds, there is some branch starting from $x_0$. \qed
\end{proposition}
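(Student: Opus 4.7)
The plan is to prove each equivalence by translating between set-relations and (set) graphs, and between class-relations and class graphs, with the only real trick being that to move from the relation formulation to the graph formulation one must carry edge-data as extra state.

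\textbf{The DC case.} For the implication from the branch formulation to DC, given an entire relation $R \subseteq A \times A$ and $x_0 \in A$, form the graph $G_0 := A$ with $G_1(x,y) := \{\emptyset\}$ when $(x,y) \in R$ and $G_1(x,y) := \emptyset$ otherwise; entirety transfers, and projecting a branch to its first coordinate gives the desired function $\N \to A$. For the converse, given an entire graph $(G_0, G_1)$ and $x_0 \in G_0$, form the set $\tilde{A} := \{(x,y,f) : x, y \in G_0,\ f \in G_1(x,y)\}$ (a set by replacement over $G_0 \times G_0$) with relation $\tilde{R}((x,y,f),(x',y',f')) :\Leftrightarrow x' = y$. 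Entirety of the graph yields entirety of $\tilde{R}$; use entirety at $x_0$ to pick some $(y_0, f_0)$ with $f_0 \in G_1(x_0, y_0)$, apply DC starting from $(x_0, y_0, f_0)$, and read off the branch $n \mapsto (x_n, f_n)$ from the resulting sequence.

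\textbf{The RDC case.} The argument is identical one meta-level up. For the implication from the branch formulation, given formulas $\alpha(x), \rho(x,y)$ for an RDC instance, take $\gamma_0 := \alpha$ and $\gamma_1(x,y,f) := \rho(x,y) \wedge f = \emptyset$; a branch then yields the RDC-function by projection. For the converse, given a class graph $(\gamma_0, \gamma_1)$ and $x_0$ with $\gamma_0(x_0)$, introduce the translated formulas
\[\alpha'(z) \;:=\; \exists x,y,f\,(z = \pair{x,y,f} \wedge \gamma_0(x) \wedge \gamma_0(y) \wedge \gamma_1(x,y,f)),\]
\[\rho'(z, z') \;:=\; \exists x,y,f,x',y',f'\,(z = \pair{x,y,f} \wedge z' = \pair{x',y',f'} \wedge x' = y).\]
Entirety of $(\gamma_0,\gamma_1)$ yields entirety of $\rho'$ on $\alpha'$; after picking an initial triple via entirety at $x_0$, apply RDC to $(\alpha', \rho')$ starting from $\pair{x_0, y_0, f_0}$ and project to recover the branch.

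There is no substantial obstacle. The only minor point requiring attention is that $\alpha'$ and $\rho'$ use only the same parameters as $\gamma_0, \gamma_1$ and remain bona fide formulas of the ambient theory; this is immediate since they employ only pairing and existential quantification, both unproblematic in CZF.
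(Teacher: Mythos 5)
Your proof is correct and is exactly the routine relation--graph translation that the paper leaves implicit (the proposition is stated with no written proof); the one point of substance, carrying the edge datum as extra state via the triples $(x,y,f)$ so that DC/RDC on a plain relation recovers a branch, is handled properly in both directions. The only cosmetic caveat is that your case-split definition ``$G_1(x,y):=\{\emptyset\}$ if $(x,y)\in R$, else $\emptyset$'' should be read as the bounded-separation set $\{z\in\{\emptyset\}\mid (x,y)\in R\}$, since $R$ need not be decidable over CZF; with that reading, entirety still transfers and the rest goes through unchanged.
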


\subsection*{Reflectivity principles for regular theories}

With this background recalled, we are ready to start on the meat of this section: showing that reflectivity principles for regular theories are equivalent to choice principles.
The main work is in building suitable weak reflections.
For this, we will proceed along similar lines to \zcref{sec:conservativity}  above: we will give generalisations of the chase construction, first in the purely relational equality-free case, and then deduce from this the general case.

For the next few statements, fix a theory $\theory$ over a signature $\Sigma$, and work in the category of $\Sigma$-structures.

\begin{definition} \needspace{3\baselineskip}  \leavevmode 
  \begin{enumerate}
  \item A map $f : A \to B$ is \emph{weakly $\theory$-reflective} if it is weakly orthogonal to all models of $\theory$.
    \[\bfig 
    \ptriangle/->`->`<--/<400,300>[A`M `B ;`f`]
    \efig\]

  \item A family of maps $A \to B_i$ is \emph{jointly weakly $\theory$-reflective} if it is jointly weakly orthogonal to all models of $\theory$.
  \end{enumerate}
\end{definition}

\begin{definition} (Reflectivity principles for theories.)
  \begin{enumerate}
  \item $\theory$ satisfies \emph{weak reflectivity} if for every $\Sigma$-structure $A$ there is some weakly $\theory$-reflective map from $A$ into a model of $\theory$.

  \item $\theory$ satisfies \emph{joint weak reflectivity} if for every $\Sigma$-structure $A$ there is some  jointly weakly $\theory$-reflective set of maps from $A$ into models of  $\theory$.

  \item $\theory$ satisfies \emph{functorial weak reflectivity} if there is a functor $F : \Str{\Sigma} \to \Mod{\theory}$ and natural transformation $\eta : 1_{\Str{\Sigma}} \to I \cdot F$ such that each $\eta_A : A \to F(A)$ is weakly $\theory$-reflective.
     (Here $I$ is the inclusion functor $\Mod{\theory} \into \Str{\Sigma}$.)
  \end{enumerate}
\end{definition}

In order to generalise the chase, we abstract the property of the one-step extension that ensures the chase yields a model of $\theory$:

\begin{definition}
    ($\theory$ regular normal.)
    A map $f : A \to B$ is \emph{$\theory$-productive} if for each axiom $\varphi(\x) \proves_{\x} \fins{\y} \psi(\x,\y)$ of $\theory$ and $\a \in \interp[A]{\x\ |\ \varphi }$, there is some $\b \in B^\y$ such that $(f(\a),\b) \in \interp[B]{\x,\y\ |\ \psi}$. 
\end{definition}

\begin{proposition} \label{prop:colim-of-t-productive}
  ($\theory$ regular normal.)
  The colimit of an $\omega$-chain of $\theory$-productive maps is a model of $\theory$.
\end{proposition}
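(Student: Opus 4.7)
The plan is to exploit the finitariness of Horn formulas, together with the standard fact that $\omega$-colimits in $\Str{\Sigma}$ are computed on underlying carriers as in $\Set$: every element of the colimit comes from some finite stage, and every finite tuple of elements jointly lifts to a common stage. Write $A_0 \to A_1 \to \cdots$ for the chain, $A_\infty$ for its colimit, and $\nu_n \colon A_n \to A_\infty$ for the coprojections.

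Fix an axiom $\tau = (\varphi(\x) \proves_\x \fins{\y} \psi(\x,\y))$ of $\theory$ and a tuple $\a \in A_\infty^\x$ with $A_\infty \believes \varphi(\a)$; the task is to produce some $\b \in A_\infty^\y$ with $A_\infty \believes \psi(\a,\b)$. First, since $\x$ is finite, one obtains some $n$ and $\a' \in A_n^\x$ with $\nu_n(\a') = \a$. Next, invoking the key \emph{Horn-reflection} step --- that any atomic formula true in a filtered colimit of $\Sigma$-structures already holds at some finite stage --- I enlarge $n$ so that $A_n \believes \varphi(\a')$, taking the maximum over the finitely many atomic conjuncts of $\varphi$. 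Then by $\theory$-productivity of the transition map $A_n \to A_{n+1}$, there is some $\b' \in A_{n+1}^\y$ such that $A_{n+1} \believes \psi(\a', \b')$ (identifying $\a'$ with its image in $A_{n+1}$). Setting $\b := \nu_{n+1}(\b')$, preservation of Horn formulas by the homomorphism $\nu_{n+1}$ then yields $A_\infty \believes \psi(\a,\b)$, as required.

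The main --- and really the only non-trivial --- obstacle is the Horn-reflection step: that an atomic formula $R(\t(\a))$ or $s(\a) = t(\a)$ true at $A_\infty$ already holds at a sufficiently large finite stage. This follows from the explicit set-theoretic construction of filtered colimits in $\Str{\Sigma}$, noting that the finitely many subterms involved can all be lifted to a common stage and, for an equality of terms, that the equation between the two resulting values is then witnessed at some further stage by filteredness. In the relational equality-free setting of Section~\ref{sec:conservativity} this step is immediate from the pair-and-level description of $\chase{A}$; in general, everything remains constructively unproblematic, since the relevant stage indices are always exhibited concretely rather than merely asserted to exist.
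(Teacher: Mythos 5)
Your proof is correct and is simply an explicit unpacking of the paper's one-line argument, which reads in full: ``Immediate by finitariness of the axioms of $\theory$.'' The lifting of the tuple and of the Horn premise to a common finite stage, followed by one application of $\theory$-productivity, is exactly the finitariness argument the paper is appealing to.
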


\begin{proof}
  Immediate by finitariness of the axioms of $\theory$.
\end{proof}

\begin{proposition} \label{prop:weakly-reflective-extensions} 
  \needspace{3\baselineskip}  
  ($\Sigma$ relational, $\theory$ regular normal equality-free.)
 
  \begin{enumerate}
  \item Assuming \WISC, every $\Sigma$-structure has some jointly weakly $\theory$-reflective set of $\theory$-productive extensions.

  \item Assuming \WIC, every $\Sigma$-structure has some $\theory$-productive and weakly $\theory$-reflective extension.

  \item Assuming \AC, the functorial extension $\iota_A : A \to \onestep{A}$ is always weakly $\theory$-reflective (besides being $\theory$-productive). \label{item:eta-wk-refl}
  \end{enumerate}
\end{proposition}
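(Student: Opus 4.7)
The plan is a three-layered variant of the one-step extension. For any structure $A$, write $P$ for the set of axiom-argument pairs: tuples $(\tau, \a)$ where $\tau = (\varphi \proves_\x \fins{\y} \psi)$ is an axiom of $\theory$ and $\a \in \interp[A]{\x\ |\ \varphi}$. This is a set, being indexed by the set $\theory$ and subsets of powers of $A$. For any model $M$ of $\theory$ and homomorphism $h : A \to M$, and each $p = (\tau, \a) \in P$, let $W_p(M, h) := \{\b \in M^{l(\y)} : M \believes \psi(h(\a), \b)\}$; since $h$ preserves Horn formulas, $M \believes \varphi(h(\a))$, and since $M$ is a model each $W_p(M, h)$ is inhabited, so the projection $\coprod_{p \in P} W_p(M, h) \surjto P$ is a cover of $P$.

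For part~(\ref{item:eta-wk-refl}) under AC, I would take $B = \onestep{A}$ itself. Given $h : A \to M$, applying AC to the $P$-indexed family $(W_p(M, h))_{p \in P}$ yields witnesses $\b_p$, from which I define $\bar h : \onestep{A} \to M$ extending $h$ by $\bar h((\tau, \a, j)) := (\b_{(\tau, \a)})_j$. Homomorphism verification is immediate from the description in Definition~\ref{def:one-step}: each basic predicate instance of $\onestep{A}$ either comes from $A$ (handled by $h$) or is the canonical instance of an axiom conclusion, whose image under $\bar h$ is satisfied in $M$ by the defining property of $\b_{(\tau, \a)}$.

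For part~(2) under WIC, I would apply WIC to $P$ to obtain a weakly initial cover $s : Y \surjto P$, and build $B$ as a $Y$-indexed variant of $\onestep{A}$: adjoin elements $(y, j)$ for each $y \in Y$ and $0 \leq j < l(\y_\tau)$ (where $s(y) = (\tau, \a)$), with predicates interpreted so that the conclusion of $\tau$ at $\a$ holds on the canonical witness tuple $(y, 0), \ldots, (y, l(\y_\tau)-1)$. Productivity is immediate from surjectivity of $s$. For weak $\theory$-reflectivity: given $h : A \to M$, the cover $\coprod_p W_p(M, h) \surjto P$ is weakly refined through $s$, yielding a map $u : Y \to \coprod_p W_p(M, h)$ over $P$; since $u(y) \in W_{s(y)}(M, h)$, setting $\bar h((y, j)) := (u(y))_j$ extends $h$ to $B$. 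Part~(1) under WISC is strictly analogous, using a jointly weakly initial family $\{s_i : Y_i \surjto P\}_{i \in I}$ to build a family $\{B_i\}_{i \in I}$ of $\theory$-productive extensions; joint initiality then supplies, for each $h$, some index $i$ for which $B_i$ admits an extension.

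The main remaining work is bookkeeping rather than any conceptual difficulty: verifying that these $Y$- (resp.\ $Y_i$-) indexed one-step extensions are well-defined $\Sigma$-structures by direct imitation of Definition~\ref{def:one-step}, and that the maps $\bar h$ defined via the cover factorizations respect all freshly adjoined predicates. The equality-free and purely relational hypotheses keep these checks as routine as for $\onestep{A}$ itself, since no quotienting or functional interpretation intervenes.
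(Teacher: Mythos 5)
Your proposal is correct and follows essentially the same route as the paper: both form the set of axiom--argument pairs (your $P$ is the paper's $M_A$), build cover-indexed variants of the one-step extension (your $Y$-indexed structure is the paper's $A[p]$), and use weak initiality of the cover against the canonical witnessing cover $\coprod_p W_p(M,h)\surjto P$ to produce the diagonal filler. The only cosmetic difference is that for the \AC{} case the paper routes through the same general construction by observing that $1_{M_A}$ is itself a weakly initial cover under \AC{}, whereas you apply \AC{} directly to the witness sets; these are interchangeable.
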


\begin{proof}
  Let $A$ be a $\Sigma$-structure.
  Recall from \zcref{def:one-step} the  one-step $\theory$-extension  $\iota_A:A \to \onestep[\theory]{A}$; we now generalise this construction.

  Take $M_A$ to be the set of pairs $(\alpha,\a)$  such that $\alpha$ is an axiom of $\theory$, with the form $\varphi(\x) \proves \fins \y \psi(\x,\y)$, and $\a \in \interp[A]{ \x\ |\ \varphi(\x) }$.
  For any map $p : X \to M_A$, define a structure $A[p]$ by taking $|A[p]|$ to be the disjoint union of $A$ with the set of pairs $(x,i)$ such that $x \in X$ and $(p(x),i) \in \onestep{A}$, i.e.\ $i \in \y$, where the axiom component of $p(x)$ has the form $\varphi(\x) \proves \fins \y \psi(\x,\y)$.
  Take the structure on $A[p]$ to be induced by the evident map $|A[p]| \to |\onestep{A}|$.
  There is an evident homomorphism $\iota^p_A : A \to A[p]$, and if $p$ is surjective, then $\iota^p$ is $\theory$-productive.

  Now if $\family{p_i : M_i \to M_A}{i \in I}$ is a weakly initial set of covers, we claim that the extensions $\iota^{p_i} : A \to A[p_i]$ are jointly weakly $\theory$-reflective.
  Suppose $f : A \to B$, where $B \believes \theory$.
  Then for each $(\alpha,\a) \in M_A$, where $\alpha$ has form $\varphi(\x) \proves \fins \y \psi(\x,\y)$, there is some $\b \in B^\y$ such that $(f(\a),\b) \in \interp[B]{\y\ |\ \psi(\x,\y) }$.
  So by weak initiality, for some $i$ there is a function giving for each $m \in M_i$ some $\b$ witnessing this existential for $p(m)$.
  This induces a homomorphism $A[p_i] \to B$ extending $f$.

  In particular, if $p : M' \to M$ is a single weakly initial cover, then $\iota^p$ is weakly $\theory$-reflective.

  Finally, assuming \AC, the identity $1_{M_A}$ is itself weakly initial, so $\iota^{1_{M_A}}$ is weakly $\theory$-reflective. 
  But $\iota^{1_{M_A}} \iso \iota_A$, under $A$.
\end{proof}

The preceding proposition gives us suitably reflective one-step extensions, or families thereof.
With the aid of (R)DC, we can now take colimits of chains of these to give suitably reflective (families of) maps into models of $\theory$.

\begin{proposition} \label{prop:colim-of-reflective}
 Assume $\DC$.
 Let $\C$ be a locally small category with colimits of $\omega$-chains, and $X$ an object of $\C$.
  \begin{enumerate}
  \item For any  $\omega$-chain $F: \omega \to \C$ of maps weakly orthogonal to $X$, the transfinite composite $F(0) \to \colim_i F(i)$ is again weakly orthogonal to $X$.
    
  \item Let $G$ be a graph, and $F : G \to \C$ a diagram such that for each vertex $i \in G_0$, the set $\family{F(a) : F(i) \to F(j) }{j \in G_0,\, a \in G_1(i,j) }$ is jointly weakly orthogonal to $X$.
    Then for any $i_0 \in G_0$ the set of all maps $\{ F(i_0) \to F_b \ |\ b \in \Br{G}{i_0} \} $ is jointly weakly orthogonal to $X$.
    (Here $F_b$ denotes the colimit of $F$ along a branch $b$.)
  \end{enumerate} 
\end{proposition}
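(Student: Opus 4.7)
The plan is to prove both parts using the graph formulation of DC recalled above. The shared idea is to build a tower (or tree) of partial lifts by repeatedly invoking (joint) weak orthogonality, and then to assemble these lifts into a single lift out of the colimit.

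For (2), suppose given $h : F(i_0) \to X$. I would define an auxiliary graph $H$ with vertex set
\[ H_0 := \{\,(i, g) \mid i \in G_0,\ g \in \operatorname{Hom}_{\C}(F(i), X)\,\} \]
and edge sets $H_1((i,g),(j,g')) := \{\, a \in G_1(i,j) \mid g' \circ F(a) = g\,\}$. Since $G_0$ is a set and $\C$ is locally small, $H_0$ is a set. The assumption that at each $i \in G_0$ the outgoing family $\{F(a)\}$ is jointly weakly orthogonal to $X$ translates exactly to entireness of $H$: for any $(i,g)$ there exist $j$, $a \in G_1(i,j)$, and $g' \colon F(j) \to X$ with $g' \circ F(a) = g$. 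By DC in graph form there is a branch in $H$ starting from $(i_0, h)$; its underlying $G$-component is a branch $b \in \Br{G}{i_0}$, and its second components assemble into a cocone on $F$ along $b$, inducing the desired lift $F_b \to X$ extending $h$.

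For (1), I would run the analogous construction with $G$ taken to be the linear graph of $\omega$, so that there is a unique branch from $i_0 = 0$; equivalently, apply plain DC to the set $\coprod_{n \in \N} \operatorname{Hom}_{\C}(F(n), X)$ with relation $(n,g) \mathrel{R} (n+1, g')$ iff $g' \circ F(n \to n+1) = g$, where entireness follows from the weak orthogonality of each $F(n) \to F(n+1)$ to $X$. Any commutative square from the transfinite composite $F(0) \to \colim_i F(i)$ to the map $X \to 1$ amounts to a map $F(0) \to X$; the sequence of lifts produced by DC is a cocone on $F$ under $F(0)$, and hence factors through the colimit to give the required filler $\colim_i F(i) \to X$.

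I anticipate no deep obstacle; the only real point of care is that we apply plain DC (for sets) rather than the schematic RDC. This is ensured by local smallness of $\C$ together with the fact that $G_0$ (respectively $\N$) is a set, so that the vertex set of $H$ is itself a set rather than a proper class.
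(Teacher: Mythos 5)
Your proof is correct and follows essentially the same route as the paper: your auxiliary graph $H$ is exactly the comma graph $F/X$ used there, with entireness supplied by joint weak orthogonality and \DC\ (in graph form) producing the branch whose second components form the cocone inducing the lift. The paper dismisses part~(1) as straightforward, and your explicit reduction to plain \DC\ on $\coprod_n \operatorname{Hom}_\C(F(n),X)$ is a correct way to carry that out.
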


\begin{proof}
  The first statement is straightforward.
 
  For the second, consider the comma graph $F/X$, where $(F/X)_0$ consists of pairs $(i,f)$ such that $i \in G_0$, $f : F(i) \to X$, and $(F/X)_1((i,f),(j,g))$ is the set of arrows $a \in G_1(i,j)$ such that $g \cdot F(a) = f$.
  The joint reflectivity assumption on $F$ implies directly that $F/X$ is entire.

  So given any $f_0 : F(i_0) \to X$, \DC\ gives us a branch in $F/X$ starting from $(i_0,f_0)$.
  The first components of this form a branch $b$ in $G$; and the second components induce a map $F_b \to X$ extending $f_0$. 
\end{proof}

\begin{proposition} \label{prop:wic-imp-wk-refl}
  ($\Sigma$ relational, $\theory$ regular normal equality-free.)
  Assuming \WIC\ and either \RRS\ or \RDC, every structure has some weakly $\theory$-reflective map into a $\theory$-model.
\end{proposition}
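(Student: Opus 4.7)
The plan is to iterate the weakly reflective one-step extension of Proposition~\ref{prop:weakly-reflective-extensions}(2) $\omega$-many times, take the colimit, and show it gives the desired weakly $\theory$-reflective map into a model of $\theory$.

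First I would invoke \WIC\ and Proposition~\ref{prop:weakly-reflective-extensions}(2): every $\Sigma$-structure $B$ admits at least one $\theory$-productive and weakly $\theory$-reflective extension $B \to B'$. Phrase this as a class graph whose vertices are $\Sigma$-structures and whose edges from $B$ to $B'$ are such extensions; the class graph is then entire. Note also that under either of the stated hypotheses we have \RDC\ at our disposal: directly under \RDC, and under \RRS\ because \WIC\ implies \DC\ by Proposition~\ref{prop:choice-principle-implications}, and \DC\ together with \RRS\ give \RDC\ by the same proposition.

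Next, starting from $A$, apply the class-graph form of \RDC\ (recalled just before the statement) to the class graph above, obtaining an $\omega$-branch $(A_n, f_n : A_n \to A_{n+1})_{n \in \N}$ with $A_0 = A$ and each $f_n$ both $\theory$-productive and weakly $\theory$-reflective. Let $\eta_A : A \to A_\omega$ be the canonical map into the colimit of this chain.

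Finally, I would verify the two required properties of $\eta_A$. Productivity of each $f_n$ together with Proposition~\ref{prop:colim-of-t-productive} yields $A_\omega \believes \theory$. For each $\theory$-model $M$, each $f_n$ is weakly orthogonal to $M$, so Proposition~\ref{prop:colim-of-reflective}(1) applied with $X = M$ shows that $\eta_A$ is weakly orthogonal to $M$ as well; as this holds uniformly in $M$, the map $\eta_A$ is weakly $\theory$-reflective. The conceptual heart of the argument---and the main point to recognise before writing the proof---is the division of labour between the two kinds of choice used: \WIC\ to produce a single suitable extension at each step, and \RDC\ to thread countably many such choices into one coherent $\omega$-chain. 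No individual step should be technically difficult, but this division is what makes the otherwise classical chase-to-a-model work in the present constructive setting.
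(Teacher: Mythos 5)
Your proposal is correct and follows essentially the same route as the paper's own proof: use \WIC\ via Proposition~\ref{prop:weakly-reflective-extensions}(2) to make the class graph of productive, weakly reflective extensions entire, apply \RDC\ (noting \RRS\ suffices since \WIC\ implies \DC) to extract an $\omega$-chain from $A$, and conclude via Propositions~\ref{prop:colim-of-t-productive} and~\ref{prop:colim-of-reflective}. The division of labour between \WIC\ and \RDC\ that you highlight is exactly the paper's.
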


\begin{proof}
  (By \zcref{prop:choice-principle-implications}, \RRS\ is equivalent to \RDC\ in the presence of \WIC.)

  Let $A$ be any $\Sigma$-structure.
  By \zcref{prop:weakly-reflective-extensions} together with \RDC, we can take some branch $A = A_0 \to A_1 \to A_2 \to \ldots$, in which each step is $\theory$-productive and weakly $\theory$-reflective.

  Now set $\bar{A} = \colim_i A_i$.
  By \zcref{prop:colim-of-t-productive}, $\bar{A} \believes \theory$; and by \zcref{prop:colim-of-reflective}, $A \to \bar{A}$ is weakly $\theory$-reflective.
\end{proof}

\begin{proposition} \label{prop:wisc-imp-joint-refl}
  ($\Sigma$ relational, $\theory$ regular normal equality-free.)
  Assuming \RDC\ and \WISC, every structure has some jointly weakly $\theory$-reflective set of maps into $\theory$-models.
\end{proposition}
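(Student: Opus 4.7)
The approach parallels that of Proposition~\ref{prop:wic-imp-wk-refl}, but with a branching rather than linear iteration: where \WIC\ gave a single weakly $\theory$-reflective extension at each step, \WISC\ (via Proposition~\ref{prop:weakly-reflective-extensions}(1)) gives a jointly weakly $\theory$-reflective set of them, which is naturally organised as a tree rather than a chain. The plan is therefore to construct an infinite labeled tree $T_\infty$ of $\theory$-productive extensions of $A$, at every vertex of which the children-edges represent a jointly weakly $\theory$-reflective set, and then to take colimits along infinite branches of $T_\infty$ to obtain the desired $\theory$-models.

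Explicitly, I would call a finite labeled tree \emph{admissible} if its root is labeled by $A$, each vertex is labeled by a $\Sigma$-structure, each edge by a $\theory$-productive homomorphism between the labels of its endpoints, and at each internal vertex with label $X$ the outgoing edges represent a jointly weakly $\theory$-reflective set of $\theory$-productive extensions of $X$. Combining Proposition~\ref{prop:weakly-reflective-extensions}(1) with finite $\exists$-elimination over the cardinal-finite leaf set of any given admissible tree, the relation ``$T'$ admissibly extends $T$ by attaching a new level at every leaf of $T$'' is entire on the class of admissible trees; applying \RDC\ from the trivial admissible tree $T_0 = \{A\}$ then yields an $\omega$-chain $T_0 \subseteq T_1 \subseteq \cdots$, whose union $T_\infty := \bigcup_n T_n$ is the tree sought. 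Every vertex of $T_\infty$ has outgoing edges (its children appear one step after it does), and these form a jointly weakly $\theory$-reflective set at its label. Viewing $T_\infty$ as a graph $G$ and its labeling as a diagram $F : G \to \Str{\Sigma}$, each infinite branch $b$ from the root gives an $\omega$-chain of $\theory$-productive maps, whose colimit $F_b$ is a $\theory$-model by Proposition~\ref{prop:colim-of-t-productive}. For any $M \in \Mod{\theory}$, Proposition~\ref{prop:colim-of-reflective}(2) applies with $X = M$ to yield joint weak orthogonality of $\{\, A \to F_b \mid b \in \Br{G}{\mathrm{root}}\,\}$ to $M$, uniformly in $M$; this is precisely joint weak $\theory$-reflectivity of that set of maps.

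The main delicate point is ensuring that the whole construction can be performed from \WISC\ and \RDC\ alone, without tacitly invoking full choice. The issue arises at each tree-extension step: passing from $T_n$ to $T_{n+1}$ requires simultaneously choosing, at each of the finitely many leaves of $T_n$, some jointly weakly $\theory$-reflective set of $\theory$-productive extensions. This is unproblematic because the leaves form a cardinal-finite and discrete set, so such a joint choice may be made by ordinary finite induction using $\exists$-elimination; \RDC\ is then invoked exactly once, to thread these extensions into an $\omega$-chain within the class of admissible trees.
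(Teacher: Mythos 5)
Your overall architecture (iterate the jointly reflective one-step extensions, then take colimits along branches and apply Proposition~\ref{prop:colim-of-reflective}(2)) is the right one and matches the paper's. But the step you single out as ``the main delicate point'' is resolved incorrectly, and this is a genuine gap. The jointly weakly $\theory$-reflective set of extensions supplied by Proposition~\ref{prop:weakly-reflective-extensions}(1) is indexed by the index set of a \WISC-family of covers, which is an arbitrary set --- there is no finiteness anywhere. Hence already $T_1$ has an arbitrary (in general not even discrete) set of leaves, so your claim that each admissible tree has a cardinal-finite leaf set is false, and the ``finite $\exists$-elimination'' you use to show that the level-extension relation is entire does not apply. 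What is actually needed at each step is a simultaneous instantiation of an existential over an arbitrary set of leaves --- exactly the kind of choice this whole section is showing to be non-trivial.

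The paper repairs precisely this point with two ingredients you are missing. First, Collection: given a set $\X$ of structures, Collection produces a set $\Y$ of targets such that for each $A\in\X$ \emph{some} jointly reflective family lands in $\Y$; then, since any superset of a jointly weakly reflective set is again jointly weakly reflective, one takes the set of \emph{all} $\theory$-productive maps from $A$ into members of $\Y$ --- a canonical family requiring no selection. Second, instead of growing a tree by \RDC, the paper applies \RRS\ to close $\{A_0\}$ up into a single set $\X$ of structures stable under this operation, takes $G$ to be the graph of all $\theory$-productive maps within $\X$, and only then invokes DC (inside Proposition~\ref{prop:colim-of-reflective}(2)) to extract branches. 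Your tree-based iteration could be made to work if the finiteness argument were replaced by this Collection-plus-superset device at each level, but as written the entirety of your extension relation is unjustified.
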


\begin{proof}
  First note that given any set $\X$ of $\Sigma$-structures, there exists some set $\Y$ of $\Sigma$-structures such that for each $A \in \X$, the set of all $\theory$-productive maps from $A$ into structures in $\Y$ is jointly weakly $\theory$-reflective. 
  This follows from \zcref{prop:weakly-reflective-extensions} together with Collection, and the fact that any superset of a jointly weakly reflective set is jointly weakly reflective.

  Now let $A_0$ be any $\Sigma$-structure.
  By \RRS\ applied to $\{A_0\}$ and the entire relation of the previous paragraph, there is some set $\X$ of $\Sigma$-structures containing $A_0$ such that for each $A \in \X$, the set of all $\theory$-productive maps from $A$ into structures in $\X$ is jointly weakly $\theory$-reflective.
  
  Take $G$ to be the graph with objects $\X$, and with arrows all $\theory$-productive maps between these.
  Then \zcref{prop:colim-of-t-productive}, applied to the evident diagram $G \to \Str{\Sigma}$, tells us that the maps $\family{A_0 \to A_b}{b \in \Br{G}{A_0}}$ form a jointly $\theory$-reflective family.
  But by \zcref{prop:colim-of-t-productive}, each such $A_b$ is a model of $\theory$; so we are done.
\end{proof}

Finally, we can put all the pieces together:

\begin{theorem}[“Choice principles $\Equiv$ reflectivity principles”]  \needspace{3\baselineskip}  \leavevmode 
  \begin{enumerate}
  \item \AC\ is equivalent to “all regular theories satisfy functorial weak reflectivity”.

  \item Assuming \RRS\ (or a fortiori \RDC), \WIC\ is equivalent to “all regular theories satisfy weak reflectivity”.

  \item Assuming \RDC, \WISC\ is equivalent to “all regular theories satisfy joint weak reflectivity”.
  \end{enumerate}
\end{theorem}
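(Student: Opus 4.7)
Each equivalence decomposes into an \emph{only if} direction (choice $\Imp$ reflection), already established by Propositions~\ref{prop:weakly-reflective-extensions}, \ref{prop:wic-imp-wk-refl}, and \ref{prop:wisc-imp-joint-refl} (the first combined with the iteration provided by Propositions~\ref{prop:colim-of-t-productive} and \ref{prop:colim-of-reflective}), and an \emph{if} direction (reflection $\Imp$ choice), which is what remains to prove.

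The plan for the converses is a uniform encoding of sets as regular theories. Given a set $X$, take $\Sigma_X$ to have a family of unary predicates $\{R_x : x \in X\}$ and no constants or function symbols, and let $\theory_X$ consist of the axioms $\ltrue \proves_\emptyset \fins{y} R_x(y)$, one per $x \in X$. A $\Sigma_X$-structure $S$ corresponds to the span $|S| \leftarrow \coprod_{x \in X} R_x(S) \to X$; under this correspondence, $\theory_X$-models are precisely those spans whose second leg is surjective, i.e.\ covers of $X$ endowed with an ambient carrier, and morphisms of $\theory_X$-models restrict to morphisms of covers over $X$.

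For case~3 (\WISC): joint weak reflection applied to $\theory_X$ at the empty $\Sigma_X$-structure $\emptyset$ yields a jointly weakly $\theory_X$-reflective family $\{\emptyset \to M_i\}_{i \in I}$; since $\emptyset$ is initial, this amounts to a set $\{M_i\}_{i \in I}$ of $\theory_X$-models such that every $\theory_X$-model receives a morphism from some $M_i$, which translates under the span correspondence to a jointly weakly initial family of covers of $X$. Case~2 (\WIC) is the analogous single-morphism version, yielding a weakly initial cover.

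For case~1 (\AC), the extra input needed is naturality of $\eta$ and functoriality of $F$. Given an inhabited family $(A_x)_{x \in X}$, form the $\theory_X$-model $M_A$ with $R_x(M_A) = A_x$; the extension $F(\emptyset) \to M_A$ of the trivial morphism sends $R_x(F(\emptyset))$ into $A_x$ for each $x$, so a choice function would follow from a canonical element $w_x \in R_x(F(\emptyset))$ for each $x$. The main obstacle is exactly to identify such canonical witnesses without appealing to choice. My plan is to consider, for each $x \in X$, the pointed $\Sigma_X$-structure $W_x$ having one element $*$ with $R_x(W_x) = \{*\}$ and $R_y(W_y) = \emptyset$ for $y \neq x$; the inclusions $W_x \to \coprod_x W_x$ together with naturality of $\eta$ yield distinguished elements $\eta_{\coprod W_x}(*_x) \in R_x(F(\coprod_x W_x))$, and composing with the functorially-induced map $F(\coprod_x W_x) \to F(M_A) \to M_A$ (where the last map is the extension of the inclusion of $\theory_X$-models) pins down a coherent family of images in $\coprod_x A_x$ over $X$, delivering the required choice function.
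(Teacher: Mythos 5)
Your converse directions for parts 2 and 3 are correct and take a genuinely different route from the paper: you encode each set $X$ as its own theory $\theory_X$, over a signature with $X$-many unary predicates, and apply (joint) weak reflection at the initial structure, whereas the paper fixes a single finite theory ($P(x) \proves_x \fins{y} R(x,y)$, one unary and one binary predicate) and encodes $X$ in the \emph{structure} via an adjunction between $\Set^\arr$ and $\Str{\Sigma}$. Your encoding is arguably more direct for \WIC\ and \WISC\ (models of $\theory_X$ really are covers of $X$), at the price of needing a different theory for each set, where the paper extracts all three choice principles from reflection for one fixed theory.

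However, your converse for part 1 has a genuine circularity. The map you call ``the functorially-induced map $F(\coprod_x W_x) \to F(M_A)$'' must, by functoriality, be $F(g)$ for some homomorphism $g : \coprod_x W_x \to M_A$; and such a $g$ is exactly a function selecting, for each $x$, an element of $R_x(M_A) = A_x$ --- i.e.\ the choice function you are trying to build. (No other canonical map into $M_A$ or into $F(M_A)$ is available either: weak reflectivity only extends maps that you already have.) The obstruction is intrinsic to the unary-predicate encoding: homomorphisms preserve each $R_x$ separately, so no amount of naturality can transport a witness of $R_x$ to a witness of $R_y$ for $x \neq y$, and there is no ``generic instance'' whose single reflection witness propagates to all $x$. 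The paper's binary relation is there precisely to fix this: all existential instances are instances of one axiom at different elements of $\interp[{}]{P}$, so the single element $\diamond$ witnessing the axiom in $W(F(1))$ over the one-point structure transports, by naturality of $\eta$ along the canonically-definable maps $F(\hat{x}) : F(1) \to F(X)$, to a uniform family of witnesses in $W(F(X))$, and composing with the extension of $F\iota : F(X) \to F(p)$ splits an arbitrary cover $p$. A secondary gap: in the forward directions, Propositions~\ref{prop:weakly-reflective-extensions}, \ref{prop:wic-imp-wk-refl} and \ref{prop:wisc-imp-joint-refl} cover only purely relational signatures and normal equality-free theories, while the theorem asserts the reflection principles for \emph{all} regular theories; you still need to transfer the reflections across elimination of equality (via the reflection $q \adjoint e$ of Definition~\ref{def:elimination-of-equalities} and the following propositions) and elimination of function symbols (Proposition~\ref{prop:elimination-of-functions}), as the paper does.
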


\begin{proof}
  We tackle first the $\Imp$ directions of the three equivalences.
  
  For $\Sigma$ purely relational and $\theory$ equality-free, these are \zcref{prop:weakly-reflective-extensions} (\ref{item:eta-wk-refl}), \zcref{prop:wic-imp-wk-refl}, and \zcref{prop:wisc-imp-joint-refl} respectively.
  
  For $\Sigma$ purely relational but $\theory$ not necessarily equality-free, these follow from the equality-free cases by elimination of equalities, as set up in \zcref{def:elimination-of-equalities}--\zcref{prop:chase-with-equality}.
  For weak reflectivity, given $A \in \Str{\Sigma}$, consider it as a $\Sigma_E$ structure $e(A)$, take a weakly $(\theory^E \cup \E{\Sigma})$-reflective map into a $(\theory^E \cup \E{\Sigma})$-model $M$, and apply $q$ to get a map from $A$ ($\iso q(e(A))$) into the quotient $\theory$-model $q(M)$.
  The resulting map $A \to q(M)$ is weakly $\theory$-reflective, by the reflection $q \adjoint e$.
  The cases for joint and functorial reflectivity are analogous.

  Finally, the case for general $\Sigma$ and $\theory$ reduce to the relational case by elimination of function symbols, \zcref{prop:elimination-of-functions}.
  Replace $\Sigma$ by an associated signature $\bar{\Sigma}$ and theory $\F{\Sigma}$ thereover, such that $\Str{\Sigma} \equiv \Mod{\F{\Sigma}}$, and moreover this restricts to $\Mod{\theory} \equiv \Mod{\bar{\theory}}$ for some theory $\bar{\theory}$ associated to $\theory$.
  Then the (functorial, weak, joint) reflection from $\Str{\bar{\Sigma}}$ into $\Mod{\bar{\theory}}$ restricts to a similar reflection from $\Str{\Sigma}$ into $\Mod{\theory}$.

  This completes the $\Imp$ directions of each equivalence. 

  \subtleparagraph For the $\ImpliedBy$ directions, consider the signature $\Sigma$ with just one unary predicate $P$ and one binary predicate $R$, and the theory $\theory$ consisting just of the axiom $P(x) \proves_x \exists y.\ R(x,y)$.
  There are evident functors $F : \Set^\arr \to \Str{\Sigma}$ and $G : \Str{\Sigma} \to \Set^\arr$, acting on objects as follows:
  \begin{enumerate}
    \item $F$ sends a function $f : Y \to X$ to the $\Sigma$-structure on $X + Y$ with $\interp{P} = \nu_1[X]$, $\interp{R} = \{ (\nu_1(fy),\nu_2(y))\ |\ y \in Y \}$;
    \item $G$ sends a $\Sigma$-structure $A$ to the projection map $\interp[A]{ x,y\ |\ P(x) \land R(x,y) }$ $\to$ $\interp[A]{ x\ |\ P(x) }$, which we denote by $g_A : G_1(A) \to G_0(A)$.
  \end{enumerate}
  It is easy to check that $F$ is left adjoint to $G$; $F$ is full and faithful; and $F$ and $G$ preserve and reflect the subcategories of $\theory$-models and covers.

  We identify any set $X$ with the unique map $0 \to X$, as an object of $\Set^\arr$.
  In particular, by $1$ we mean for now the map $0 \to 1$, not the terminal object of $\Set^\arr$.

  We now take the three $\ImpliedBy$ directions in turn.
  
  Firstly, suppose $W : \Str{\Sigma} \to \Str{\Sigma}$ is a functorial weak reflection, with unit $\eta : 1 \to W$; we will show that \AC\ holds.
  
  Take any singleton $1 = \{\star\}$.
  Then $W(F(1))$ contains some element $\diamond$, such that $W(F(1)) \believes R(\eta(\nu_1(\star)),\diamond)$.
    
  Now, let $p : Y \surjto X$ be any cover; we will construct a splitting of $p$.
    
  There is a unique map $\iota : X \to p$ in $\Set^\arr$ over $1_X$.
  This induces a map $F \iota : F(X) \to F(p)$, which by the universal property of the weak reflector factors through $W(F(X))$ as $F \iota = f \comp \eta_{F(X)}$, for some $f$.
    
  We can now obtain a splitting of $p$, by first finding a witnessing function for the existentials in $W(F(X))$, and then composing this with $f$.
  Precisely, each $x \in X$ determines
  a map  $\hat{x} : 1 \to X$, and hence a homomorphism $\name{x} := F\hat{x} : F(1) \to F(X)$.
  For each $x \in X$, naturality of $\eta$ ensures that $W(F(X)) \believes R(\eta_{F(X)}(\nu_1(x)), W\name{x}(\diamond))$.
  So the map $X \to X + Y$ $(= \domain{F(p)})$ sending $x$ to $f(W\name{x}(\diamond))$ factors through $Y$; and this factorisation gives a section of $p$.
  \[\bfig
  \square/>`<-`<-`>/<750,500>[W(F(1))`W(F(X))`F(1)`F(X);W\name{x}`\eta_{F(1)}`\eta_{F(X)}`\name{x}]
  \ptriangle(750,0)/->`<-`<-/<750,500>[W(F(X))`F(p)`F(X);f``F\iota]
  
  \efig\]
  
  \subtleparagraph Secondly, suppose weak reflectivity holds for $\theory$; we want to show that \WIC\ holds.
  So let $X$ be any set; we will find a weakly initial cover for $X$.
  
  Consider the $\Sigma$-structure $F(X)$ as before, and let $\eta : F(X) \to A$ be some weak reflection into a $\theory$-model.
  Applying $G$ to $A$ gives a cover $g_A : G_1(A) \surjto G_0(A)$, and transposing $\eta$ under the adjunction yields a map $\hat{\eta} : X \to G_0(A)$.
  Pulling back $g_A$ along $\hat{\eta}$ gives a cover $p : \bar{X} \surjto X$, and a factorisation of $\hat{\eta}$ through the canonical inclusion $\iota : X \to p$.
  
  Transposing again, we get a factorisation of $\eta : F(X) \to A$ through $F (\iota) : F(X) \to F(p)$,
  so $F (\iota)$ is weakly $\theory$-reflective since $\eta$ was.
  Since $F$ is full and faithful and sends covers to models of $\theory$, it follows that $\iota : X \to p$ is left-orthogonal to all covers (viewed as objects of $\Set^\arr$).
  That is, $p : \bar{X} \to X$ is a weakly initial cover. 

  \subtleparagraph Finally, to show that joint reflectivity for $\theory$ implies \WISC, we perform the same construction as above on each map in a jointly reflective set $\eta_i : F(X) \to A_i$, and obtain (by essentially the same argument) a weakly initial set of covers $p_i$.
\end{proof}

\opt{arxiv}{\bibliographystyle{amsalphaurlmod}}
\opt{jsl}{\bibliographystyle{asl}}
\bibliography{non-discrete}

\end{document}